\newtheorem{thm}{Theorem}
\newtheorem*{thm*}{Theorem}
\newtheorem{prop}[thm]{Proposition}
\newtheorem{lemma}[thm]{Lemma}
\newtheorem{cor}[thm]{Corollary}
\theoremstyle{definition}
\newtheorem*{defn}{Definition}
\crefname{thm}{Theorem}{Theorems}
\crefname{lemma}{Lemma}{Lemmas}
\crefname{prop}{Proposition}{Propositions}
\crefname{cor}{Corollary}{Corollaries}
\crefname{section}{Section}{Sections}
\crefname{figure}{Figure}{Figures}
\crefname{question}{Question}{Questions}
\crefname{equation}{}{}
\crefname{section}{Section}{Sections}
\newcommand{\E}{\mathbb E}
\renewcommand{\P}{\mathbb P}
\newcommand{\N}{\mathbb N}
\newcommand{\Z}{\mathbb Z}
\newcommand{\hS}{\widehat S}
\newcommand{\hB}{\widehat B}
\newcommand{\hp}{\widehat p}
\newcommand{\hq}{\widehat q}
\newcommand{\hqp}{\widehat q{\,}'}
\newcommand{\hx}{\widehat x}
\newcommand{\cell}{{\text{\tt cell}}}
\newcommand{\boxright}{\rhd}
\newcommand{\lev}{\Lambda}
\newcommand{\df}[1]{\textbf{\boldmath #1}}
\title[Polluted Bootstrap Percolation]{Polluted Bootstrap Percolation \\ with Threshold Two  in All Dimensions}
\keywords{bootstrap percolation; cellular automaton; critical scaling}
\subjclass[2010]{60K35; 82B43}
\date{3 April 2017}
\author{Janko Gravner}
\address{Janko Gravner, Mathematics Dept., University of California, Davis, CA 95616}
\email{gravner@math.ucdavis.edu}
\author{Alexander E.~Holroyd}
\address{Alexander E.~Holroyd,
Microsoft Research, Redmond, WA 98052}
\email{holroyd@microsoft.com}
\begin{document}

\begin{abstract}
In the polluted bootstrap percolation model, the vertices of a graph
are independently declared initially occupied with probability $p$ or closed with probability $q$.  At subsequent steps, a vertex becomes occupied if it is not closed and it has at least $r$ occupied neighbors.  On the cubic lattice $\Z^d$ of dimension $d\geq 3$ with threshold $r=2$, we prove that the final density of occupied sites converges to $1$ as $p$ and $q$ both approach $0$, regardless of their relative scaling.  Our result partially resolves a conjecture of Morris, and contrasts with the $d=2$ case, where Gravner and McDonald proved that the critical parameter is $q/{p^2}$.
\end{abstract}

\maketitle

\section{Introduction}\label{sec-intro}

Bootstrap percolation is a fundamental cellular
automaton model for nucleation and growth from sparse
random initial seeds.  In this article we address how the
model is affected by the presence of pollution in the form
of sparse random permanent
obstacles.

Let $\Z^d$ be the set of $d$-vectors of integers, which we
call \df{sites}, and let $p,q\in[0,1]$ be parameters. In the
\df{initial} (time zero) configuration, each site is chosen
to have exactly one of three possible states:
$$\begin{cases}
\begin{array}{ll}
  \text{\df{closed}}  & \text{with probability }q; \\
  \text{\df{open} and \df{initially occupied}} & \text{with probability }p; \\
  \text{\df{open} but not initially occupied} & \text{with probability }1-p-q.
\end{array}
\end{cases}
$$
Initial states are chosen independently for different sites.  Closed sites represent pollution or obstacles, while occupied sites represent a growing agent.

The configuration evolves in discrete time steps $t=0,1,2,\ldots$ as follows.
As usual we make $\Z^d$ into a graph by declaring sites
$u,v\in\Z^d$ to be neighbors if $\|u-v\|_1=1$. The
\df{threshold} $r$ is an integer parameter.  An open site $x$ that is unoccupied at time $t$
becomes occupied at time $t+1$ if and only if
\begin{equation}
\text{at least $r$ neighbors of $x$ are occupied}
\label{standard}
\end{equation}
 at time $t$.
Closed sites remain closed forever and
cannot become occupied.  Open sites remain open.  Once a site is occupied, it remains
occupied.  In the main cases of interest, $d\geq r\geq 2$.


Bootstrap percolation without pollution (the case $q=0$ in our formulation) has a long and rich history with many surprises.
For $d\geq r \geq 1$, there is no phase transition in $p$, in the sense that every site of $\Z^d$ is eventually occupied almost surely for every $p>0$, as proved in \cite{van-enter} ($d=2$) and \cite{schonmann} ($d\geq 3$).  The metastability properties of the model on finite regions are understood in great depth (see e.g.\ \cite{AL,Hol1,BBDM,GHM}), while a broad range of variant growth rules have also been explored (e.g.\ \cite{GG,DE,BDMS}).  For further background see the discussion later in the introduction, and the excellent recent survey \cite{Mor}.

The polluted bootstrap model (i.e.\ the case $q>0$) was introduced by Gravner and McDonald \cite{GM} in 1997.  The principal quantity of interest is the {\it final density\/} of occupied sites, i.e.\ the probability that the origin is eventually occupied, in the regime where $p$ and $q$ are both small.  In dimension $d=2$ with threshold $r=2$,  Gravner and McDonald proved that the final density is strongly dependent
on the relative scaling of $p$ and $q$.  Specifically,
there exist constants $c,C>0$ such that, as $p\to 0$ and $q\to 0$ simultaneously,
$$
\P\bigl(\text{the origin
is eventually occupied}\bigr)\to
\begin{cases}
  1, & \text{if } q<c p^2;\\
  0, & \text{if } q>C p^2.
\end{cases}
$$

In this article we give the first rigorous treatment of the polluted bootstrap percolation model in dimensions $d\geq 3$.  We take the threshold $r$ to be $2$.  (Threshold $r=3$ is addressed in a companion paper \cite{GHS} by the current authors together with Sivakoff, as discussed below).  Our main result is that, in contrast with dimension $d=2$, occupation prevails regardless of the $p$ versus $q$ scaling.

\begin{thm}\label{main}
 Consider polluted bootstrap percolation on $\Z^d$ with $d\ge 3$, threshold
  $r=2$, density $p>0$ of initially occupied sites,
  and density $q>0$ of closed sites.  We have
$$
\P\bigl(\text{\rm the origin
is eventually occupied}\bigr)\to 1\qquad\text{as }(p,q)\to (0,0).
$$
Moreover, the probability that the origin lies in an infinite
connected set of eventually occupied sites also tends to $1$.
The same statements hold for modified bootstrap percolation.
\end{thm}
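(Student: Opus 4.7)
The plan is a multi-scale renormalization argument exploiting a geometric mechanism present in $d \ge 3$ but absent in $d=2$: once a large two-dimensional connected set of occupied sites has formed in a coordinate hyperplane, every site of the adjacent parallel hyperplane lying directly above an occupied site already has one occupied neighbor, so only one further occupied neighbor is required for it to become occupied. Growth in the adjacent hyperplane is therefore driven by a supercritical site percolation process (with density $1-q$ of non-closed sites), which has infinite clusters for every sufficiently small $q$. This extra degree of freedom is what rescues the dynamics from the obstacle blocking that would otherwise trap it in a rectangle of diameter of order $1/q$.

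I would proceed as follows. First, nucleate a two-dimensional droplet: within some coordinate plane, and inside a box of side $L_0 = L_0(p)$, locate a small seed cluster that initiates standard planar $r=2$ growth, and let the resulting rectangle grow until blocked on each of its four sides by a closed site. This produces a ``plate'' of occupied sites in the plane with only a sparse set of interior holes. Second, use the plate to seed the adjacent parallel plane: at each site above an occupied site of the plate, the effective threshold is $r-1=1$, so any seed in the adjacent plane initiates a site-percolation cluster in the sublattice of non-closed sites, and these clusters together cover all but a small fraction of the adjacent plane. Iterating across successive planes produces a three-dimensional region of eventually occupied sites. The construction is then embedded in a hierarchical renormalization in which the ``good block'' event at the final scale has probability tending to $1$ as $(p,q)\to (0,0)$, regardless of their relative scaling. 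The infinite-cluster statement follows from a standard comparison with supercritical block percolation, and the modified bootstrap case is handled by the same argument, since the plate-plus-transverse-seed mechanism always supplies the two occupied neighbors in orthogonal coordinate directions.

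The most difficult step, I expect, is controlling the layer-to-layer growth: a hole in one plane deprives its vertical counterpart in the next plane of an occupied neighbor, so successive planes are not independent percolations but a coupled system, and in regimes where $q$ is only moderately small the accumulated deprivations could in principle cascade and stall the growth. The main obstacle is therefore to show that such holes are ``plugged'' by growth arriving from the two perpendicular coordinate directions quickly enough that the effective occupied density within each plane remains close to $1-q$ as the iteration proceeds. This is especially delicate in regimes where $q$ is comparable to a small power of $p$, since then the initial plate is narrow relative to the typical obstacle spacing. I suspect the renormalization must be run at several hierarchical scales, with good blocks at each scale serving as the renormalized seeds for the next, so that after finitely many levels the effective densities are boosted into a regime where the supercritical percolation mechanism becomes robust.
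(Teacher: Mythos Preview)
Your proposal has a genuine gap at its very first step. In the regime $q \gg p^2$ --- precisely the regime that makes the theorem nontrivial --- the planar $r=2$ nucleation you rely on fails: by the Gravner--McDonald result for $d=2$, growth from a seed inside a coordinate plane is blocked by closed sites before it forms a plate of any useful size. You acknowledge that this regime is ``especially delicate,'' but saying the renormalization ``must be run at several hierarchical scales'' is not a mechanism; every level of such a hierarchy faces the same obstruction, because its basic unit is still a flat plate punctured by closed sites, and there is no bootstrap amplification available to push the effective parameters into a better regime. The layer-to-layer step inherits the same circularity: you correctly observe that holes propagate vertically and that the effective non-occupied density can grow with the number of layers, but your proposed remedy --- that growth from the two perpendicular directions plugs the holes --- is exactly the two-dimensional bootstrap mechanism that the closed sites have already defeated.

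The paper's argument is structurally different and sidesteps both difficulties. Rather than growing in flat coordinate planes that contain closed sites, it constructs random \emph{curved} two-dimensional surfaces (``curtains'') consisting \emph{entirely of open sites}, obtained as the outer boundary of the set reachable by certain ``permissible paths'' from a half-space (an adaptation of the Lipschitz-surface duality of Dirr--Dondl--Grimmett--Holroyd--Scheutzow). On such a surface closed sites simply do not occur, so growth along it behaves like unpolluted two-dimensional bootstrap and succeeds for every $p>0$. Finite pieces of curtains (``sails'') inside boxes are then arranged so that occupation is transmitted from one sail to an intersecting one with a different orientation, and comparison with supercritical oriented percolation (plus a sprinkling argument for the initial nucleation) finishes the proof. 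The key point you are missing is this decoupling: the curtain is built using only the closed-site configuration (hence only $q$), while $p$ enters only through the choice of the box scale, so the argument is automatically uniform over all relative scalings of $p$ and $q$.
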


In the above statement, a set of sites is called \df{connected} if it induces a connected subgraph of $\Z^d$.  The \df{modified} bootstrap percolation model is a well-known variant of the standard model, in which the condition \eqref{standard} for a site to become occupied is replaced with:
\begin{equation*}
\text{for at least $r$ of the directions $i=1,\ldots, d$,
either $x-e_i$ or $x+e_i$ is occupied,}
\end{equation*}
where $e_i$ is the $i$th coordinate vector.  (As before, closed sites cannot become occupied, and occupied sites remain occupied forever).

\cref{main} resolves Conjecture 4.6 of Morris \cite{Mor} in the key
case $r=2$.  To be precise, this conjecture may be expressed as: for all $d>r \geq 1$, there exists an infinite connected eventually occupied set with probability at least $1/2$ for $(p,q)$ sufficiently close to $(0,0)$.  The author states that the conjecture seems to be very difficult.

Defining
$\phi(p,q)=\phi_{d,r}(p,q)$ to be the probability that the origin is eventually occupied, it follows from the obvious monotonicities of the model that $\phi$ is (weakly) increasing in $p$ and decreasing in $q$.  Therefore, the convergence in \cref{main} is equivalent to  $\lim_{q\to 0} \lim_{p\to 0} \,\phi(p,q)=1$.  This formulation will be reflected in our proof.  We will show that for $q$ sufficiently small there is an infinite structure of open sites on which occupation can spread, no matter how small $p$, and that the density of this structure tends to $1$ as $q\to 0$.  Our methods are very different from those in previous works on bootstrap percolation, and involve the technology of oriented surfaces introduced recently in \cite{DDGHS}.

Our result reveals an interesting phase transition.  Let $r=2$ and $d\geq 3$ and consider the decreasing function $\phi^+(q):=\phi(0^+,q)=\lim_{p\to 0^+} \phi(p,q)$.  \cref{main} implies that $\phi^+(q)>0$ for $q$ sufficiently close to $0$.  On the other hand, standard arguments imply that $\phi^+(q)=0$ if $q$ exceeds one minus the critical probability $p_c^{\textrm{site}}(\Z^d)$ of site percolation.  Therefore the critical probability
$$q_c:=\inf\{q: \phi^+(q)=0\}$$
is nontrivial.  In fact, we show the following slightly stronger fact involving a strict inequality.

\begin{cor} \label{main-follow} Consider the setting of
\cref{main}. The critical value $q_c$ defined above satisfies
$0<q_c\le 1-p_c^{\text{\rm site}}(\Z^d)$.  For $d=3$, the latter inequality is strict.  The function $\phi^+$ vanishes
on $(q_c,1]$, is strictly positive
on $[0,q_c)$, and converges to $1$ as $q\to 0$.
\end{cor}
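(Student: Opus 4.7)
Most of the claims follow directly from \cref{main} combined with standard percolation arguments; the subtle point is the strict inequality for $d=3$.

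From \cref{main}, $\phi^+(q) \to 1$ as $q \to 0^+$, which immediately gives both $q_c > 0$ and the final convergence statement of the corollary. Since $\phi$ is monotone decreasing in $q$ by the natural coupling, so is $\phi^+$; hence by the definition of $q_c$ as an infimum, $\phi^+$ vanishes on $(q_c, 1]$ and is strictly positive on $[0, q_c)$.

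To show $q_c \le 1 - p_c^{\text{site}}(\Z^d)$, fix $q > 1 - p_c^{\text{site}}(\Z^d)$ and condition on the set $C$ of closed sites. The complement is a subcritical Bernoulli$(1-q)$ site percolation, so the open cluster $K$ of the origin in $\Z^d\setminus C$ is almost surely finite with $\E|K| < \infty$. The eventually occupied cluster of the origin is a connected subset of open sites, hence lies in $K$, and it must contain at least one initially occupied site. A union bound gives
\[
\phi(p,q) \;\le\; \frac{p}{1-q}\, \E|K| \;\longrightarrow\; 0 \qquad\text{as }p\to 0^+,
\]
so $\phi^+(q) = 0$, and the bound follows.

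For the strict inequality when $d=3$, the plan is to exhibit some $q^* \in (q_c,\,1-p_c^{\text{site}}(\Z^3))$ at which $\phi^+(q^*) = 0$. The key observation is that the eventually occupied cluster $S$ of the origin satisfies a stronger constraint than mere connectivity in the open set: every site $x\notin S$ with two or more neighbors in $S$ must itself be closed (for modified bootstrap, with two or more coordinate directions containing an $S$-neighbor). For an infinite $S$ in $\Z^3$, this ``$2$-boundary'' of $S$ must form a $2$-dimensional surface of closed sites separating the origin from infinity in a suitable sense---exactly the sort of object handled by the oriented Lipschitz-surface technology of \cite{DDGHS} employed elsewhere in this paper. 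The substantive step is to show that the critical density for the existence of such a blocking Lipschitz surface around the origin is strictly less than $1 - p_c^{\text{site}}(\Z^3)$. I expect this to come from a renormalization argument in the spirit of an enhancement-type strict inequality: the additional geometric flexibility of $2$-boundary surfaces (they need not correspond to dual cuts of site percolation) should strictly lower the threshold. Making this strict comparison quantitative, and verifying that the resulting blocking structure genuinely prevents growth from any sparse set of initial seeds, is where I anticipate the main technical effort.
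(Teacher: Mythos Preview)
Your treatment of the easy parts---positivity of $q_c$, the convergence $\phi^+(q)\to 1$, the behaviour on the two intervals, and the non-strict bound $q_c\le 1-p_c^{\text{site}}(\Z^d)$ via the finite open cluster of the origin---is correct and matches the paper (your version of the non-strict bound is in fact slightly more direct).

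The gap is the strict inequality for $d=3$.  What you have written there is a sketch of a programme, not a proof: you invoke Lipschitz surfaces and an enhancement-type renormalization, but you do not actually carry out any strict comparison, and it is not clear that the ``2-boundary'' surface you describe fits the framework of \cite{DDGHS} in a way that yields a strict inequality of critical values.  The paper avoids all of this with a much shorter device.  Working with the \emph{standard} model (which dominates the modified one, so it suffices), call a site \emph{3-open} if it is open and has at least three open neighbours, and let $p_c'$ be the critical probability for percolation of 3-open sites.  The key deterministic observation is: if $Z\subseteq\Z^d$ is such that no site of $\partial Z$ is 3-open and no site of $Z\cup\partial Z$ is initially occupied, then no site of $Z\cup\partial Z$ is ever occupied.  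Indeed, a first-occupied site $x$ would lie in $\partial Z$, hence have at most two open neighbours, at least one of which lies in $Z$ and is still unoccupied; so $x$ has at most one occupied neighbour, contradicting $r=2$.  Taking $Z$ to be the 3-open cluster of the origin (after a harmless local adjustment to guarantee $0\in Z$) and running your own union-bound argument with $Z$ in place of $K$ gives $\phi^+(q)=0$ whenever $q>1-p_c'$, i.e.\ $q_c\le 1-p_c'$.  Finally, 3-open percolation is an essential enhancement of ordinary site percolation, so $p_c'>p_c^{\text{site}}(\Z^3)$ by the Aizenman--Grimmett method \cite{AG,BBR}, which yields the strict inequality.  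This replaces your entire surface argument with a one-line blocking lemma plus a citation.
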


Our methods do not produce a good lower bound on $q_c$, and give
no information on the behavior of $\phi^+$ near $q_c$.

As mentioned earlier, the companion paper \cite{GHS} treats polluted bootstrap percolation with threshold $r=3$.  The strongest result of \cite{GHS} is for the modified bootstrap percolation model with $d=r=3$.  Similarly to the case $d=r=2$ of \cite{GM}, but in contrast with the $d>r=2$ case of \cref{main}, the final density here depends on the $p$ versus $q$ scaling, but now with a \emph{cube} law (modulo logarithmic factors).  Specifically, as $p,q\to 0$, the final occupied density converges to $1$ if $q< c\,(p/\log p^{-1})^3$, and to $0$ if $q> C p^3$.   Interestingly, the first of these bounds relies crucially on \cref{main} of the current article (together with a straightforward renormalization argument).  The second bound (which is far from straightforward) again uses oriented surfaces, but in a completely different way: to block growth rather than to facilitate it.

We record some simple observations about other choices of the threshold $r$.  For $d=r$, notwithstanding the detailed results of \cite{GM,GHS}, an easy argument rules out the conclusion $\lim_{(p,q)\to(0,0)} \phi(p,q)=1$ of \cref{main}.  Indeed, if $p,q\to 0$ with $p=o(q^{2^d})$ then with high probability there exist $M<0<N$ such that no site in the box $\{0,1\}^{d-1}\times [M,N]$ is initially occupied but every site on the two ends $\{0,1\}^{d-1}\times \{M,N\}$ is closed.  On this event, the origin cannot become occupied.  (For the modified model, the same argument works even for the line $\{0\}^{d-1}\times [M,N]$, giving the same conclusion under the weaker assumption $p=o(q)$.  Similar comparisons involving
$\{0,1\}^{d-d'}\times \Z^{d'}$ or $\{0\}^{d-d'}\times \Z^{d'}$ for $d>d'$ are available, which, when combined with the results of \cite{GM} for $d'=2$ or \cite{GHS} for $d'=3$, yield further improvements.)  On the other hand, the case of threshold $r=1$ is easily understood via standard site percolation: the final occupied set is simply the union of all open clusters that contain initially occupied sites.  (This observation is relevant to \cref{main-follow}.)  Finally, thresholds $r>d$ are less interesting to us, since, even with no closed sites, there are finite sets such as $\{0,1\}^d$ that remain unoccupied forever if unoccupied initially, so $\lim_{(p,q)\to(0,0)} \phi(p,q)=0$.

\subsection*{Background}

Bootstrap percolation is an established model for nucleation and
metastability, and one of very few cellular automaton models with a well-developed mathematical theory.  It has been applied in physics, biology, and
social science to various growth phenomena, including crack formation, crystal growth, and spread of information or infection.  See \cite{GZH} for a recent example.  Bootstrap percolation has been used in the rigorous analysis of other models such as sandpile and Ising models; see e.g.\ \cite{Mor}.  The evolving set method in Markov mixing theory can be viewed as bootstrap percolation with a randomly varying threshold \cite{evolve}.

Bootstrap percolation was first considered on trees \cite{CLR}, but the lattice $\Z^d$ with its physics connotations has received the most attention.  There has been recent interest in mean-field and power-law graphs, motivated in part by applications to social networks; see e.g.\ \cite{JLTV,amini2,koch}.

Polluted bootstrap percolation was introduced in \cite{GM} on the two dimensional lattice.  Potential areas of application include the effects of impurities on crystal growth, of immunization on epidemics, or of interventions on spread of rumors.
Since \cite{GM}, rigorous progress on growth processes in random environments has been limited, and the case of polluted bootstrap percolation in three and higher dimensions has been entirely open until now.
Here are some examples of work on related models.
Investigation of asymptotic shapes in models related
to polluted bootstrap percolation with $r=1$ was
initiated in \cite{GMa}; a recent paper \cite{JLTV} studies
such processes on a complete graph with excluded edges;
and \cite{DEKNS} addresses a Glauber dynamics (which can be viewed
as a non-monotone version of bootstrap percolation) with
``frozen'' vertices. Polluted bootstrap percolation
and closely related models have been used in empirical studies
of complex networks with ``damaged'' vertices \cite{BDGM2, BDGM1}.

A key element in our proof will be the simple but powerful method of random oriented surfaces recently introduced in \cite{DDGHS}.  This method has been further used and developed in a variety of contexts \cite{interface,ent,geom,embed,comb,bod-tex}, but ours is the first application to cellular automata so far as we are aware.  A distinct application to polluted bootstrap percolation will appear in \cite{GHS}.

Another useful tool will be the results of \cite{LSS}
concerning domination of finitely dependent processes.  A
random configuration $X=(X_v)_{v\in \Z^d}$ taking values in
$\{0,1\}^{\Z^d}$ is called \df{$m$-dependent} if
$(X_v)_{v\in A}$ and $(X_v)_{v\in B}$ are independent of
each other whenever the sets $A$ and $B$ are at distance
greater than $m$.  The relevant result of \cite{LSS} is
that for any $p<1$ there exists $p'=p'(p,d,m)<1$ such that,
if $X$ is $m$-dependent and satisfies $\E X_v\geq p'$ for
all $v$, then $X$ stochastically dominates an i.i.d.\ process
with parameter $p$.

\subsection*{Outline of proof and organization}

The modified bootstrap percolation model is ``weaker'' than the standard model, in the sense that it is more difficult for a site to become occupied, so that for a given initial configuration, the occupied set for the modified model is a subset of that for the standard model at each time $t$.  Therefore, it suffices to prove the conclusions of \cref{main} for the modified model.  Moreover, we may without loss of generality assume that $d=3$.  Indeed, for $d\geq 4$ we may restrict to the $3$-dimensional subspace $\Z^3\times\{0\}^{d-3}$.  Any site that becomes occupied in the $d=3$ model restricted to the subspace also becomes occupied in the full model on $\Z^d$ (where in both cases $r=2$).  Therefore, for the remainder of the paper we consider the modified bootstrap percolation model with $r=2$ on $\Z^3$ except where explicitly stated otherwise.

In the absence of closed sites, the two-dimensional bootstrap rule
fills $\Z^2$ from any positive density $p$ of occupied sites.
This suggests the following approach.  For $q$ sufficiently small we may attempt to construct an infinite two-dimensional surface that avoids closed sites and behaves like $\Z^2$, in the sense that it also admits growth by the $r=2$ model for any $p>0$.  In \cref{sec-open-curtains} we indeed construct an oriented surface, called a \emph{curtain}, with some of the required properties.  In particular, starting from an infinite fully occupied half space of $\Z^3$, a curtain will become fully occupied almost surely for any $p>0$.  The construction of the curtain itself does not involve $p$, and does not depend on the locations of initially occupied sites.

A curtain alone is not sufficient to prove \cref{main}, because a \emph{finite} occupied nucleus does not lead to indefinite growth on a curtain.
To address this, we will use a renormalization argument involving curtains with different orientations that intersect each other.  This part of the argument \emph{will} involve $p$, in the determination of a length scale. In \cref{sec-sails} we construct the unit of our renormalization, which is a curtain restricted to a finite box, with carefully constrained geometry, and scaled to facilitate the required intersections.  This modified curtain is called a \emph{sail}.  The size of the box is chosen to be a power of $p^{-1}$, which allows the sail to contain sufficient initially occupied sites for growth similar to that on a curtain.  In \cref{sec-activation} we use comparison methods to show that if two sails intersect appropriately then occupation is transmitted from one to the other.  Finally, \cref{sec-renormalization} completes the renormalization argument, which involves comparison of an infinite network of sails with supercritical oriented percolation, together with ``sprinkling'' for the initial nucleation.

We conclude the paper with a list of open problems.

\subsection*{Notation and conventions}

As stated earlier, we work with the polluted modified bootstrap percolation model with threshold $r=2$ on $\Z^3$ unless stated otherwise.  The cubic lattice, also denoted $\Z^3$, is the graph with vertex set $\Z^3$ and with an edge between sites $u$ and $v$ whenever $\|u-v\|_1=1$.  When discussing sets of sites, connectivity and components always refer to this graph.

When describing subsets of $\Z^3$, intervals will be understood to denote their intersections with $\Z$, so $[a,b)$ denotes $[a,b)\cap\Z=\{a,a+1,\ldots,b-1\}$, etc.  Let $\N$ be the set of nonnegative integers.
We will frequently wish to consider $2$-dimensional layers of $\Z^3$, which by convention will be taken perpendicular to the $3$rd coordinate.  Thus, for $k\in\Z$ we define the $k$th \df{layer} to be
$$\lev_k:=\Z^2 \times \{k\}=\bigl\{x\in\Z^3:x_3=k\bigr\}.$$
Let $\langle \cdot,\cdot\rangle$ denote the standard inner product on $\Z^3$, and let $e_1,e_2,e_3$ be the standard coordinate vectors.

We will consider paths of various types, not always with nearest-neighbor steps.  In general, a \df{path} is a finite or infinite sequence of sites $(\ldots,)x_0,x_1,\ldots,x_n(,\ldots)$.  Its \df{steps} are the vectors $(\ldots,)x_1-x_0,x_2-x_1,\ldots,x_n-x_{n-1}(,\ldots)$.  It is a nearest-neighbor path if all steps are of the form $\pm e_i$.  It is self-avoiding if all its sites are distinct.

\section{Curtains}\label{sec-open-curtains}

In this section we introduce the oriented surfaces
underlying our construction in their pure form.
Later they will be modified by scaling and restricting to finite boxes.

\begin{defn} A \df{curtain} is a set $D\subset \Z^3$ satisfying
the following.
\begin{enumerate}
\item[(C1)] For any $k\in \Z$, the intersection
    $D\cap\lev_k$ with layer $k$ is an infinite path
    comprising steps $e_1$ and $-e_2$, with no three
    consecutive steps in the same direction; i.e.\ no
    $e_1, e_1, e_1$ or $-e_2, -e_2, -e_2$.
\item[(C2)] For all $x\in D$, either $x+(0,0,-1)\in D$ or
    $x+(1,1,-1)\in D$.
\end{enumerate}
\end{defn}

\cref{fig-proto} in the next section shows the intersection
of a curtain with a box.  The main goal of this section is
to construct an infinite open curtain when $q$ is
sufficiently small.  This will be done adapting the duality
technique introduced in \cite{DDGHS} for construction of
Lipshitz surfaces.  The curtain will form the outer
boundary of a set reachable by certain paths from a fixed
half space. Before giving the construction, we illustrate
the relevance of curtains to bootstrap percolation with the
following lemma. (Formally, the lemma will not be used in
the proof of \cref{main}. Instead we will use a more
specialized variant, \cref{growth-sail}.)

\begin{samepage}
\begin{lemma} \label{curtain-spread}
Let $D$ be a curtain.  Suppose that for every $x\in D$, the
three sites $x$ and $x+(0,0,1)$ and $x+(-1,-1,1)$ are all
open. Moreover, suppose that for every $k\in\N$, the set $
(D\cap\lev_k)+e_3$ contains some initially occupied site.
If $D\cap\lev_0$ is initially entirely occupied, then
 $D\cap \bigcup_{k\in \N} \lev_k$ becomes entirely occupied in the modified bootstrap
 model on $\Z^3$.
\end{lemma}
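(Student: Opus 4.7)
The plan is to proceed by induction on the layer $k\geq 0$. The base case $k=0$ is immediate from the hypothesis that $D\cap\lev_0$ is initially occupied; the inductive step will assume $P_k:=D\cap\lev_k$ is entirely occupied and establish the same for $P_{k+1}:=D\cap\lev_{k+1}$. My key device will be two open ``translates'' of $P_k$ that lie in $\lev_{k+1}$: the vertical shift $U:=P_k+e_3$ and the diagonal shift $V:=P_k+(-1,-1,1)$. The triple-openness hypothesis places $U\cup V$ inside the open sites, and (C2) yields $P_{k+1}\subseteq U\cup V$: each $y\in P_{k+1}$ is of ``Type~1'' ($y-e_3\in D$, so $y\in U$) or ``Type~2'' ($y+(1,1,-1)\in D$, so $y\in V$).

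The first stage of the inductive step is to occupy $U$ entirely, starting from the seed $s\in U$ together with the already-occupied $P_k$. Since $U$ mirrors $P_k$ as an $e_1,-e_2$ path in $\lev_{k+1}$, every $z=x+e_3\in U$ has its downward neighbor $x\in P_k$ occupied, supplying axis~$3$. Bi-directional propagation from $s$ along $U$ then proceeds: a path-neighbor of an occupied site in $U$ is a genuine $\Z^3$ neighbor and acquires axis~$3$ from below together with axis~$1$ or axis~$2$ from the adjacent occupied site on $U$. Iterating, all of $U$ (and hence every Type~$1$ site of $P_{k+1}$) becomes occupied.

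The second stage handles the Type~$2$ sites by classifying each $y=x+(-1,-1,1)\in V$ according to the step pattern $(p,n)$ of $P_k$ at $x$, where $p$ is the incoming step and $n$ the outgoing one. The easy case is $(e_1,-e_2)$: then $y$ has the two neighbors $y+e_1,y+e_2\in U$ in distinct axes, and $y$ activates directly once $U$ is filled. The straight cases $(e_1,e_1)$ and $(-e_2,-e_2)$ give $y$ exactly one neighbor in $U$ (supplying one of axes~$1$, $2$) together with a path-neighbor within $V$ supplying the complementary axis. Propagation within $V$ then runs corner-to-corner, and the ``no three consecutive'' rule of (C1) isolates straights between corners, so each chain is short.

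The main obstacle is the hard corner $(-e_2,e_1)$ at $x$, where $y$ has no neighbor in $U$ and must receive both axis~$1$ and axis~$2$ from its two path-neighbors in $V$. I expect these two path-neighbors to fall under the easier $(e_1,-e_2)$ or straight cases above (forced by the local (C1) constraint), so that the activation reaches $y$ from both directions once those neighbors have been cleared. Verifying this via a local case analysis of the patterns at $x\pm e_1,\,x\mp e_2\in P_k$ completes the inductive step, yielding $P_{k+1}$ entirely occupied, and the induction then gives the conclusion.
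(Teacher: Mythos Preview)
Your proof is correct and follows the same overall structure as the paper's: induct on $k$, first fill $U=(D\cap\lev_k)+e_3$ using the seed together with the occupied layer below, then fill $V=U-(1,1,0)$, and invoke (C2) to get $P_{k+1}\subseteq U\cup V$. The only difference is in how you fill $V$. Rather than your four-way case split on local step patterns, the paper argues uniformly: for any $z\in V$, property (C1) guarantees $z+ae_1,\,z+be_2\in U$ for some $a,b\in\{1,2,3\}$; the intervening sites $z+ie_1$ (for $0<i<a$) and $z+je_2$ (for $0<j<b$) lie in $V$ (hence are open) and each has a neighbor in $U$ along the complementary axis, so modified bootstrap propagates inward along both arms and reaches $z$. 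This single argument subsumes all four of your cases at once, and in particular dispatches your ``hard corner'' without the separate verification you leave as a sketch. Your expectation there is correct---(C1) indeed forces an easy corner within two steps on each side of a hard one, so no circular dependency arises---but the paper's uniform formulation avoids having to check it.
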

\end{samepage}

\begin{figure}
\centering
\includegraphics[width=.4\textwidth]{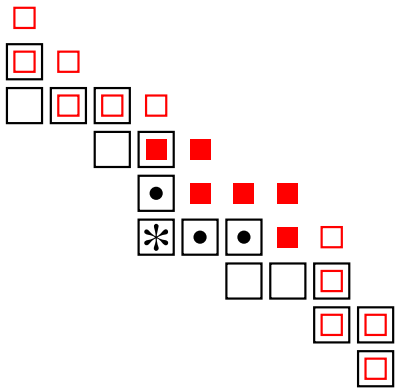}
\caption{An illustration of the proof of \cref{curtain-spread}.
  Two consecutive layers of a curtain are shown from above.  Large squares
are present in the upper layer, and small (red) squares in the lower layer.
The argument showing that the upper layer site marked with a star becomes occupied is indicated.
We consider the portion of the lower layer path shown by filled squares, and deduce that
all the upper layer sites marked with discs become occupied.
  }\label{paths}
\end{figure}

\begin{proof}
By induction on the layer, it suffices to prove that
$D\cap\lev_1$ becomes entirely occupied. This verification
is given in two steps below, and is illustrated in
\cref{paths}. Let $Y:=(D\cap\lev_0)+e_3$ be the set above
the intersection with the bottom layer.

First we claim that every site in
$Y$ eventually becomes occupied.
Indeed,
$Y$ is connected, open, and
contains an occupied site, and
every $y\in Y$ has an occupied neighbor $y-e_3\notin Y$.
The claim therefore follows from the bootstrap rule.

We now claim that every site in $Y-(1,1,0)$ also eventually
becomes occupied. Indeed, consider such a site
$z=y-(1,1,0)$ where $y\in Y$. Since $Y$ is a path with the
properties given in (C1), there exist sites $z+a e_1$ and
$z+b e_2$ in $Y$, where $a,b\in[0,3]$.  Moreover, the
intervening sites $z+i e_1$ and $z+j e_2$ for $i\in(0,a)$
and $j\in(0,b)$ are open by (C2), and each has a neighbor
in $Y$ distinct from $z+a e_1$ and
$z+b e_2$.  Since all sites in $Y$ become occupied, so do all
these sites, whence so does $z$.

The proof is now concluded by observing that
$D\cap\lev_1\subseteq Y\cup(Y-(1,1,0))$.
\end{proof}

Now we proceed with the construction of a curtain.
A \df{permissible path} is a finite sequence of sites $x_0,\ldots,x_n\in\Z^3$ such that every step $x_{i+1}-x_i$ satisfies the following.
Either it is a \df{taxed} step, which is to say that $x_{i+1}$ is \emph{closed}, and $x_{i+1}-x_i$ equals
$$(1,1,0).$$
Otherwise, the step is \df{free}, that is, $x_{i+1}-x_i$ lies in
$$\Bigl\{(-1,0,0),(0,-1,0),(0,0,-1),(-2,1,0),(1,-2,0),(-1,-1,1)\Bigr\}.$$
(with no restriction on the states of sites).

Fix any (deterministic) set $H\subset\Z^3$ and let $A$ be
the (random) set reachable by permissible paths from $H$.
Then define the following outer boundary:
\begin{equation}\label{boundary}
D:=\bigl\{x\notin A: x-(1,1,0)\in A\bigr\}.
\end{equation}

\begin{lemma} \label{curtain} For any choice of $H$, the set $D$
is either empty or an open curtain.
\end{lemma}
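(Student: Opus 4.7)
The plan is to derive the structure of the random set $A$ from the permissible moves, and then read off $D$ as a staircase boundary for which (C1) and (C2) are essentially built in.

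First, I would dispose of openness and the degenerate cases. If $x\in D$ were closed, then the taxed step $(1,1,0)$ from $x-(1,1,0)\in A$ would place $x\in A$ by definition, contradicting $x\notin A$; so every element of $D$ is open. If $A=\emptyset$ then $D=\emptyset$ trivially, and if $A=\Z^3$ then $D=\emptyset$ because there is no $x\notin A$. Henceforth I assume $\emptyset\ne A\ne \Z^3$.

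Next I would analyze each layer. Define $g_k(a):=\sup\{b:(a,b,k)\in A\}$. The free moves $(-1,0,0)$ and $(0,-1,0)$ show $A\cap\lev_k$ is downward closed in $(a,b)$, so $\{b:(a,b,k)\in A\}=(-\infty,g_k(a)]$ and $g_k$ is nonincreasing; the moves $(1,-2,0)$ and $(-2,1,0)$ give the staircase bounds
\[ g_k(a)-g_k(a+1)\le 2, \qquad g_k(a)-g_k(a+2)\ge 1 \]
whenever the values are finite. Any $g_k(a)=+\infty$ propagates within the layer via $(1,-2,0)$, and then across all layers via $(0,0,-1)$ and $(-1,-1,1)$, forcing $A=\Z^3$; and any single site of $A$ generates a finite lower bound on $g_k$ throughout $\Z\times\Z$ by running the same moves forward. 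Combined with my standing assumption, $g_k:\Z\to\Z$ is everywhere finite. Unfolding the definition then gives
\[ D\cap\lev_k = \bigl\{(a,b,k) : g_k(a) < b \le g_k(a-1)+1\bigr\}. \]
Listing, for each $a$ in turn, the heights $g_k(a-1)+1, g_k(a-1),\ldots, g_k(a)+1$ and stepping from $(a,g_k(a)+1,k)$ horizontally to $(a+1,g_k(a)+1,k)$ (which is the top of column $a+1$) exhibits $D\cap\lev_k$ as a bi-infinite path with steps $e_1$ and $-e_2$. The bound $g_k(a)-g_k(a+1)\le 2$ caps each column at three sites and so forbids three consecutive $-e_2$; the bound $g_k(a)-g_k(a+2)\ge 1$ forbids two consecutive zero-drops of $g_k$ and hence three consecutive $e_1$. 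This yields (C1).

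For (C2), I would extract two inter-layer inequalities from the cross-layer moves: $(0,0,-1)$ gives $g_{k-1}(a)\ge g_k(a)$, and $(-1,-1,1)$, which sends $(a+1,b+1,k-1)\in A$ to $(a,b,k)\in A$, gives $g_{k-1}(a+1)\le g_k(a)+1$. Given $x=(a,b,k)\in D$ (so $g_k(a)<b\le g_k(a-1)+1$), I would split on whether $b\le g_{k-1}(a)$. In that case I claim $(a+1,b+1,k-1)\in D$: the upper bound $b+1\le g_{k-1}(a)+1$ is the case hypothesis, and $b+1>g_{k-1}(a+1)$ follows from $b\ge g_k(a)+1\ge g_{k-1}(a+1)$. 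Otherwise $b>g_{k-1}(a)$ and I claim $(a,b,k-1)\in D$: non-membership in $A$ is the case hypothesis, and $b-1\le g_{k-1}(a-1)$ follows from $b-1\le g_k(a-1)\le g_{k-1}(a-1)$. The main obstacle here is simply organizing the six inequalities---two defining (C1) for $x$, two staircase bounds, two inter-layer bounds---without confusing $a$ with $a\pm 1$ or $b$ with $b\pm 1$. I would mitigate this by first proving the $g_k$ calculus as a short self-contained sublemma before running the two-case verification.
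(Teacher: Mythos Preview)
Your argument is correct, but it is organized quite differently from the paper's. The paper fixes a site $x\in D$ (taken to be the origin), and verifies (C1) and (C2) by a purely local case analysis: it checks directly, using individual free steps, which of the four candidate neighbours $(1,0,0),(0,-1,0),(-1,0,0),(0,1,0)$ lie in $D$, rules out three-in-a-row by testing the specific sites $(0,-3,0)$ and $(3,0,0)$, and handles uniqueness of the path in a layer via the diagonal $\{(k,k,0)\}$. Property (C2) is a one-line dichotomy on whether $(0,0,-1)\in A$. By contrast, you encode $A$ globally through the height functions $g_k$, derive a small calculus of staircase and inter-layer inequalities from the permissible moves, and then read (C1) and (C2) off that calculus. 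Your route requires more scaffolding (the definition of $g_k$, the finiteness argument ruling out $g_k(a)=\pm\infty$), but it repays this with a transparent picture of $D$ as the staircase region between consecutive columns of $g_k$, from which the step restrictions and the inter-layer compatibility drop out mechanically; the paper's approach is shorter and needs no auxiliary objects, at the cost of a sequence of ad hoc site checks that are easy to mis-index. Either approach would be perfectly acceptable here.
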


The lemma is of course only useful when $D$ is nonempty.
This will be proved to hold under suitable circumstances in
\cref{surface} below.

\begin{proof}[Proof of \cref{curtain}] We must prove that if $D$ is nonempty then it is open and has properties
(C1) and (C2). Consider any $x\in D$. By translation invariance of the definition, we assume
without loss of generality that $x$ is
the origin $0=(0,0,0)$.

Clearly, $0$ is open,
since otherwise the taxed step from $(-1,-1,0)\in A$ would make
$0\in A$.

Turning to property (C1), we have $(-1,-1,0)\in A$
but $0\notin A$, so using the definition of free steps, $(-1,-2,0)\in A$ but
$(1,0,0)\notin A$.
We claim that either
$(1,0,0)\in D$ or $(0,-1,0)\in D$, but not both.
Indeed, if $(0,-1,0)\in A$ then $(1,0,0)\in D$, while
if $(0,-1,0)\notin A$ then $(0,-1,0)\in D$.
A similar argument shows that either $(-1,0,0)\in D$ or $(0,1,0)\in D$ but not both.
This shows that $D\cap \lev_0$ is a union of disjoint paths with steps $e_1$ and $-e_2$.  To check the restriction on three consecutive steps, note that $(0,-3,0)\in A$ but $(2,-1,0)\notin A$, which implies $(0,-3,0),(3,0,0)\notin D$.
To show that there is only one path, note that $(-1,-1,0)$ is a sum of two free steps, so the diagonal $\{(k,k,0):k\in\Z\}$ is partitioned into an interval belonging to $A$ and an interval belonging to $A^C$.  If $D$ is nonempty then both intervals are nonempty, and so the diagonal contains exactly one site in $D$.


To prove property (C2),
note that $(-1,-1,-1)\in A$ but $(1,1,-1)\notin A$. Consequently, if
$(0,0,-1)\notin A$, then $(0,0,-1)\in D$. On the other hand,
if $(0,0,-1)\in A$, then $(1,1,-1)\in D$.
\end{proof}

We now choose $H$ to be the half-space
$$H:=\{x: x_1+x_2+x_3\le 0\},$$
and let $A$ and $D$ by defined as above. Note that, by
property (C1), a curtain intersects the line
$\{(t,t,0):t\in\Z\}$ in exactly one site.

\begin{prop}\label{surface} There exist positive constants $q_0$ and $c$
such that the following holds. For $q<q_0$, the set $D$
constructed above is, almost surely, an open curtain.
Furthermore, the probability that $(1,1,0)\in D$ tends to
$1$ as $q\to 0$, while for any $q<q_0$, the probability
that $D$ intersects the ray $\{(t,t,0): t>k\}$ is less than
$e^{-ck}$ for all $k>0$.
\end{prop}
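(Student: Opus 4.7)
The plan is to adapt the oriented-surface duality method of \cite{DDGHS}: bound the probability that the random set $A$ reaches far in the ``forbidden'' direction by counting permissible paths weighted by their probability of existing. The key observation is the choice of the linear potential $\langle h,\cdot\rangle$ with $h=(1,1,1)$: the unique taxed step $(1,1,0)$ has $\langle h,(1,1,0)\rangle=2$, while each of the six free steps $v$ has $\langle h,v\rangle = -1$. Hence any permissible path from $x_0\in H$ to a point $y$ with $T$ taxed and $F$ free steps satisfies $\langle h,y\rangle \le \langle h,x_0\rangle + 2T - F \le 2T - F$.

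To bound $\P(y\in A)$, I will first reduce to self-avoiding permissible paths via loop removal. On a self-avoiding path the $T$ taxed endpoints are distinct, so the probability they are all closed is exactly $q^T$. The number of step sequences of $T$ taxed and $F$ free steps ending at $y$ is at most $\binom{T+F}{T}6^F$ (the starting point is then determined; dropping the constraints of self-avoidance and $x_0\in H$ only inflates the bound). A union bound over $y$ then gives
$$\P(y\in A) \;\le\; \sum_{\substack{T,F\ge 0\\ 2T-F\ge \langle h,y\rangle}} \binom{T+F}{T}\,6^F\,q^T.$$
An exponential tilt controls this sum: multiplying the summand by $e^{\lambda(2T-F-\langle h,y\rangle)}\ge 1$ on the summation range and extending to all $T,F\ge 0$ yields $e^{-\lambda\langle h,y\rangle}\sum_{n\ge 0}(qe^{2\lambda}+6e^{-\lambda})^n$. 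Choosing $\lambda$ large enough that $6e^{-\lambda}<\tfrac12$ and $q_0$ small enough that $qe^{2\lambda}<\tfrac12$ for all $q<q_0$ makes the geometric series converge, giving $\P(y\in A)\le Ce^{-\lambda\langle h,y\rangle}$. Allowing $\lambda=\lambda(q)\to\infty$ as $q\to 0$ (e.g.\ $\lambda = -\tfrac13\log q$) shows in addition that $\P(y\in A)\to 0$ as $q\to 0$ for any fixed $y$ with $\langle h,y\rangle>0$.

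The three conclusions are then read off by specializing to $y=(t,t,0)$, where $\langle h,y\rangle=2t$. As noted already in the proof of \cref{curtain}, the vector $(-1,-1,0)$ is the sum of two free steps, so whenever $(t,t,0)\in A$ we also have $(t-1,t-1,0)\in A$; combined with $(t,t,0)\in H\subset A$ for $t\le 0$, this shows $A\cap\{(t,t,0):t\in\Z\}=\{(t,t,0):t\le\tau\}$ for some $\tau\in\N\cup\{\infty\}$, and $D$ meets the diagonal at exactly one point $(\tau+1,\tau+1,0)$ when $\tau<\infty$ (and not at all when $\tau=\infty$). The tail bound $\P((k,k,0)\in A)\le Ce^{-2\lambda k}$ immediately yields $\P(\tau=\infty)=0$, so $D$ is almost surely nonempty and hence by \cref{curtain} an open curtain; the same bound gives $\P(D\cap\{(t,t,0):t>k\}\neq\emptyset)=\P(\tau\ge k)\le\P((k,k,0)\in A)\le e^{-ck}$ with $c=2\lambda$; and $\P((1,1,0)\in D)=\P(\tau=0)=1-\P((1,1,0)\in A)\to 1$ as $q\to 0$ by the $q$-dependent refinement.

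The only point requiring real care is the reduction to self-avoiding paths, which is essential so that the per-path probability factor is the clean $q^T$ rather than $q^{|\text{distinct taxed endpoints}|}\ge q^T$ (the latter direction of inequality would be useless). Beyond that, everything is a routine application of the path-counting/exponential-tilting paradigm; the conceptual input is simply the fortunate fact that the chosen taxed and free steps make $\langle h,\cdot\rangle$ a strictly ``unbalanced'' potential relative to $H$, which is exactly what powers the exponential decay and hence the whole proposition.
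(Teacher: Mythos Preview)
Your proof is correct and follows essentially the same route as the paper: the linear potential $h=(1,1,1)$, loop-erasure to self-avoiding permissible paths, and a path-counting union bound; the paper simply sums over total length $n$ via $n_T\ge(2k+n)/3$ and $7^n$ step choices, while your exponential tilt with $\lambda=-\tfrac13\log q$ reproduces exactly the same geometric series $\sum_n(7q^{1/3})^n$. The only cosmetic slip is that your bound $\P((k,k,0)\in A)\le Ce^{-2\lambda k}$ carries a prefactor $C$, which you should absorb (e.g.\ $Ce^{-2\lambda k}\le (Ce^{-2\lambda})^k$ for $k\ge 1$, assuming $C\ge 1$ and $Ce^{-2\lambda}<1$) before setting $c$.
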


\begin{proof}
Let $h=(1,1,1)$.  Observe that the scalar product
$\langle x, h\rangle$: equals $2$ when $x$ is the
taxed step; equals $-1$ when $x$ is a free step;
and is nonpositive when $x\in H$.

Fix $k\ge 1$ and suppose $(k,k,0)\in A$.
Then there exists a permissible from $H$ to $(k,k,0)$. By erasing loops, we
may assume that the path is self-avoiding.
Let $n_F$ and $n_T$ be
the number of free and taxed steps
of the path, respectively, and let $n=n_F+n_T$ be the total length. As $\langle (k,k,0),h\rangle=2k$, the above observations about scalar products imply
$-n_F+2n_T\ge 2k$. It follows that $n\ge n_T\ge k$
and $n_T\ge (2k+n)/3$. Therefore,
\begin{equation}\label{surface-eq1}
\begin{aligned}
\P\bigl((k,k,0)\in A\bigr)&\le
\P(\text{there exists a path as above})\\
&\le \sum_{n\ge k}7^n q^{(2k+n)/3}\\
&= (7q)^k\cdot\sum_{n\ge 0}(7 q^{1/3})^{n}\\
&\le 8\cdot (7q)^k, 
\end{aligned}
\end{equation}
provided $q<q_0:=8^{-3}$.

Note that $0\in A$, so that if $(k,k,0)\notin A$ then
$\{(t,t,0):t\in[1,k]\}$ intersects $D$ while
$\{(t,t,0):t>k\}$ does not.  Thus, if $q<q_0$ then
\cref{curtain} implies that $D$ is almost surely nonempty,
and thus is an open curtain by \cref{curtain}.  Equation
\cref{surface-eq1} also gives the claimed exponential
bound, since $8\cdot(7q)^k\leq (56 q_0)^k$. Taking $k=1$ in
\cref{surface-eq1}, we get $\P((1,1,0)\notin D)\le 56q$,
giving the second claim.
\end{proof}

The results from this section are already strongly
suggestive of the conclusions of \cref{main}, although by
no means sufficient to prove them. Indeed, consider the
initial configuration consisting of the fully occupied
half-space $\{x: \langle x, e_3\rangle \le 0\}$, and
elsewhere product measure with densities $p$ and $q$ as
usual. It follows easily from
\cref{curtain-spread,curtain,surface} that the probability
that any fixed site $x\in \Z^3$ is eventually occupied
converges to $1$ as $(p,q)\to (0,0)$.   Indeed, with high
probability $x$ lies in a curtain that has the properties
in \cref{curtain-spread}: the presence of the appropriately placed
open sites can be guaranteed via \cite{LSS}, while the presence of an occupied site in
each layer of the curtain holds almost surely. The
remaining difficulty in the proof of \cref{main} is the
need to replace the occupied half-space with a finite
nucleus.

\section{Sails}\label{sec-sails}

A \df{box} in $\Z^3$ is a Cartesian product of any three
integer intervals. Its \df{dimensions} are the
cardinalities of the three intervals (in order). An
\df{oriented box} is a box with a distinguished corner.

Fix an integer length scale $L$. This scale will be later
chosen to be a suitable function of $p$. A \df{brick} is an
oriented box of dimensions $4L$, $16L$, and $32L$, in any
order.  Bricks will be the units of our renormalization. We
will formulate the required properties of bricks by
translating and scaling a smaller box. The \df{proto-brick}
$\hB$ is the oriented box $[0,4L)\times [0,4L)\times
[0,2L)$ with the distinguished corner at the origin.

We now formulate the key definition in our renormalization
argument. The idea is that the proto-brick contains a
suitably placed portion of a curtain, with properties
analogous to those in \cref{curtain-spread}, but restricted
to the proto-brick.  See \cref{fig-proto} for an
illustration.

\begin{defn}
 The proto-brick $\hB$ is \df{good} if
there exists a set $\hS\subseteq \hB$ with the following
properties:
\begin{enumerate}
\item[(G1)] all sites in the following set are open:
$$
\sigma(\hS):=\bigl\{x,\;x+(0,0,1),\;x+(-1,-1,1): x\in \hS\bigr\}\cap \hB;$$
%
\item[(G2)] $\hS$ satisfies (C2) in the definition of a
    curtain except at the bottom layer: for all $x\in
    \hS\setminus\lev_0$, either $x+(0,0,-1)\in \hS$ or
    $x+(1,1,-1)\in \hS$;
\item[(G3)] $\hS\subseteq\{x: 3L< x_1+x_2+x_3<4L\}$;
\item[(G4)] for each layer $k\in[0,2L)$, the intersection
    $\hS\cap\lev_k$ is an oriented path that starts on
    $\{x:x_1=0\}$, ends on $\{x:x_2=0\}$ and makes steps
    $-e_2$ or $e_1$ with no consecutive three steps of
    the same type; and
\item[(G5)] for each layer except the top, there is an
    occupied site immediately above its intersection with
    $\hS$, i.e.\ $(\hS\cap \lev_k)+e_3$ contains an
    occupied site for each $k\in [0,2L-1)$.
\end{enumerate}
\end{defn}

\begin{figure}
\centering
\includegraphics[width=.5\textwidth]{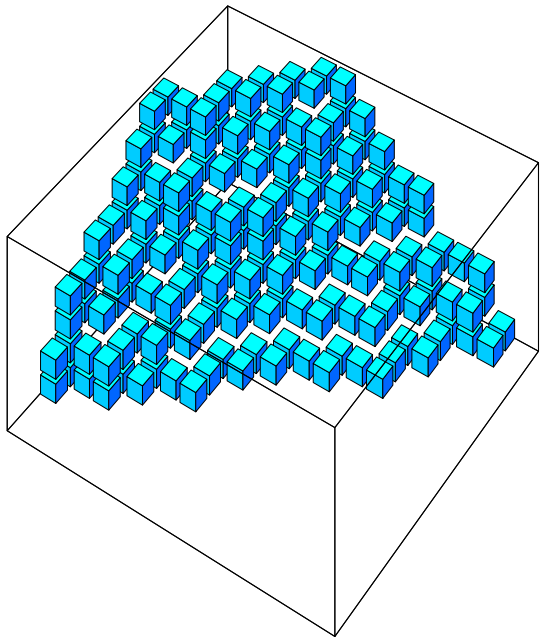}
\caption{\label{fig-proto}  A good proto-brick with $L=4$.
The set $\hS$ comprises the (sites in the centers of) colored cubes.
The origin is at the back corner, hidden by the
set.  The third coordinate axis is vertical.}
\end{figure}

Next we scale up this definition to a brick, starting with
one in a standard location and orientation.  Let $B$ be the
brick $[0,4L)\times [0,16L)\times [0,32L)$ with the
distinguished corner at the origin. For $x\in \hB$, define
the following subset of $B$:
$$
\cell(x)=(x_1,4x_2,16x_3)+\{0\}\times[0,4)\times [0,16).
$$
See \cref{fig-cell}. For a given configuration on $B$, we
define an \df{auxiliary configuration} on $\hB$ by
declaring a site $x\in \hB$ open if all sites in $\cell(x)$
are open; otherwise, we declare $x$ closed. We also call
$x$ initially occupied if all sites in $\cell(x)$ are
initially occupied. We call $B$ \df{good} if, in the
auxiliary configuration, $\hB$ is good.  See
\cref{fig-cell}.

If $B$ is good and $\hS$ is any set satisfying the above
conditions, then we call
$$
S=\bigcup_{x\in \hS} \cell(x)
$$
a \df{sail} for $B$. Thus $B$ is good if and only it has a
sail.

Define the \df{tail} and \df{head} of $B$ to be its lower
and upper halves, $[0,4L)\times [0,16L)\times [0,16L)$ and
$[0,4L)\times [0,16L)\times [16L,32L)$ respectively. The
\df{tip} of $B$ is the box $[0,4L)\times [0,4L)\times
[16L,32L)$, which is a quarter of the head. See
\cref{fig-cell}. The \df{base} of $B$ is the bottom layer
of cells
$
\bigcup_{x\in \hB\cap\lev_0} \cell(x).
$
If $B$ is good and $S$ is a sail for $B$, then the
\df{head}, \df{tail}, \df{base}, and \df{tip} of $S$ are
the intersections of $S$ with the corresponding subsets of
$B$.

If the brick $B$ is good, and $S$ is a sail for $B$, then
we say that $S$ is \df{activated} by time $t$ if every site
in the the head of $S$ is occupied at time $t$.

Now we transfer all the above definitions to an arbitrary
brick $B'$ by isometry.  More precisely, let $\eta$ be an
isometry of $\Z^3$ that maps $B$ to $B'$, respecting the
distinguished corners.  The head of $B'$ is the image under
$\eta$ of the head of $B$. The brick $B'$ is good if
applying $\eta^{-1}$ to the configuration makes $B$ good,
in which case a sail for $B'$ is an image under $\eta$ of a
sail for $B$ in that configuration, and so on.

\begin{figure}
\centering
{}\hfill
\includegraphics[width=.1\textwidth]{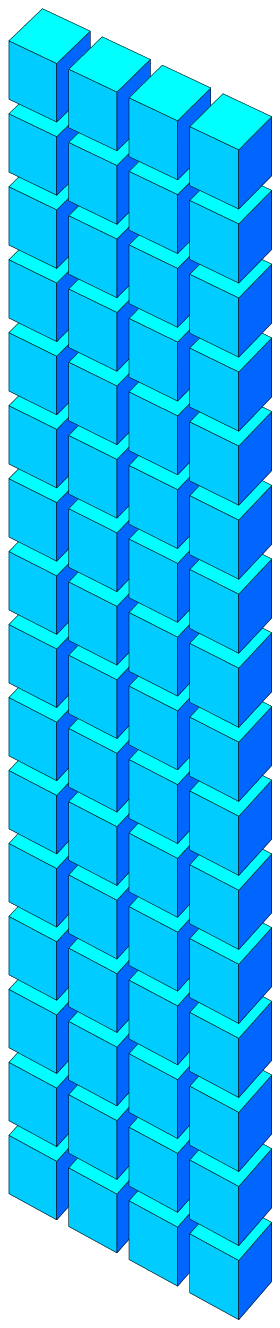}\hfill
\raisebox{-1in}{\includegraphics[width=.4\textwidth]{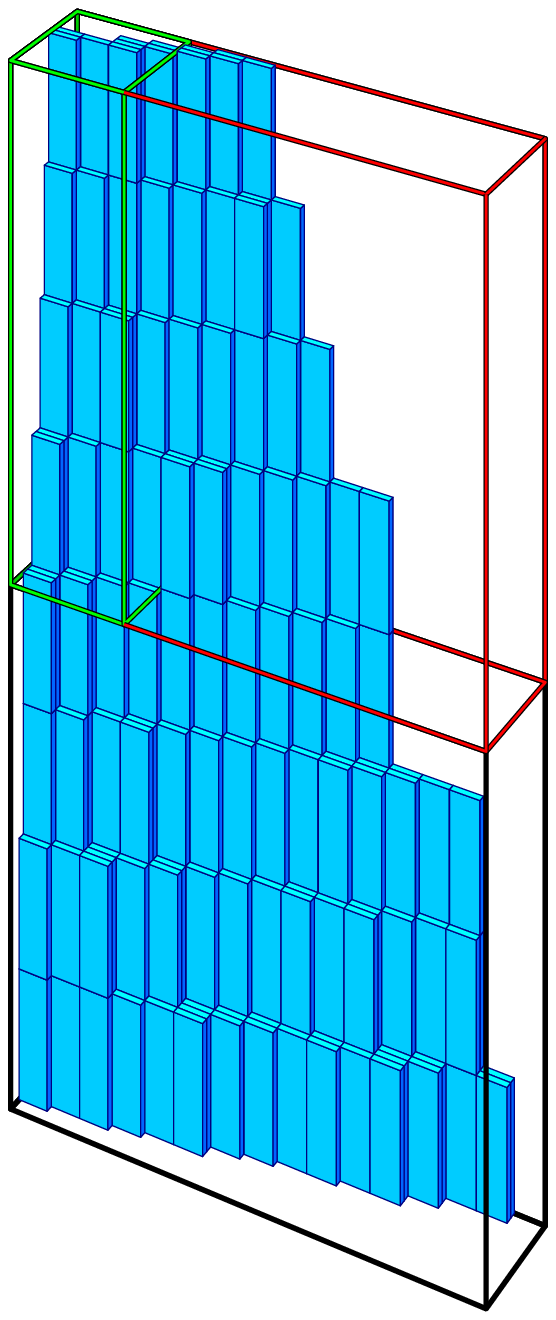}}
\hfill{}
\caption{\emph{Left:} An example of a cell $\cell(x)$, which is a box of dimensions $(1,4,16)$.
\emph{Right:} A good brick $B$ and its sail $S$ for $L=4$.
This is obtained by scaling the set $\hS$ of
\cref{fig-proto} and replacing each of its sites with a cell.
The head of the brick is outlined in red, and its tip in green. Again,
the distinguished corner, the origin, is at the back, hidden by the sail.
}\label{fig-cell}
\end{figure}

We next show that with high probability a brick is good,
and moreover the sail can be chosen to contains a specific
site.

\begin{prop}\label{good-likely}
Assume $L=\lceil p^{-128}\rceil$.  Then the probability
that $B$ is good and has a sail $S$ that contains the site
$x_0:=(L+1,4L+4,16L)$ converges to $1$ as $(p,q)\to (0,0)$.
\end{prop}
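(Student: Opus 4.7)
The plan is to adapt the construction of \cref{curtain} to the proto-brick, working in the auxiliary configuration on $\hB$. Each $\hx\in\hB$ is aux-closed with probability $\hq:=1-(1-q)^{64}\leq 64q$ and aux-initially-occupied with probability $p^{64}$, independently across sites since the cells $\cell(\hx)$ are pairwise disjoint. Extend the configuration to all of $\Z^3$ by declaring sites outside $\hB$ to be open. Since the curtain of \cref{curtain} does not intrinsically force sites above it to be open, I first strengthen the notion of closedness: call $\hx$ \emph{bad} if at least one of $\hx,\hx+(0,0,1),\hx+(-1,-1,1)$ is aux-closed. The bad process is $m$-dependent for a constant $m$, with marginal at most $3\hq$, so by the Liggett--Schonmann--Stacey domination cited in the introduction it is stochastically dominated by an i.i.d.\ closed process with parameter $\hqp=\hqp(\hq)$ satisfying $\hqp\to 0$ as $\hq\to 0$.

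Next I apply the permissible-path construction of \cref{curtain} to the shifted half-space $H:=\{x\in\Z^3:x_1+x_2+x_3\leq 3L\}$, with the bad process playing the role of ``closed''. Let $A$ be the reachable set, let $D:=\{x\notin A:x-(1,1,0)\in A\}$, and set $\hS:=D\cap \hB$. By \cref{curtain}, $D$ is an open curtain, and its proof also gives that each $x\in D$ is not bad, which unfolds to all three sites of $\sigma(\{x\})$ being aux-open; this is (G1). Property (G2) is immediate from (C2) for $D$ together with (G3), which keeps the shifted site inside $\hB$. The lower bound in (G3) holds because $x\in D$ forces $x\notin H$. The upper bound in (G3) and the inclusion $(L+1,L+1,L)\in \hS$ both follow from the path-counting estimate \cref{surface-eq1} with $\hqp$ in place of $\hq$: for any $y$ with $m:=\langle y,(1,1,1)\rangle-3L\geq 0$, one has $\P(y\in A)\leq C(C'\hqp)^{m/2}$. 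Union-bounding over the $O(L^3)$ sites of $\hB$ with $m\geq L-2$ gives probability $O(L^3(C'\hqp)^{(L-2)/2})\to 0$ for (G3) to fail, while the $m=2$ case gives $\P((L+1,L+1,L)\in A)=O(\hqp)\to 0$ (and $(L,L,L)\in H\subseteq A$ trivially). Property (G4) then follows from (G3) and (C1): along each layer path $x_1$ is non-decreasing and $x_2$ non-increasing in the forward direction, so the slab constraint forces entry through $\{x_1=0\}$ and exit through $\{x_2=0\}$.

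For (G5), I condition on the $\sigma$-field generated by all aux-closed statuses, which determines $\hS$. Given this conditioning, each aux-open site is independently aux-initially-occupied with conditional probability $(p/(1-q))^{64}$, since the cells are disjoint. Each layer path $\hS\cap\lev_k$ has length $\Omega(L)$ by (G3)--(G4), and the sites of $(\hS\cap\lev_k)+e_3$ are aux-open by (G1); hence the conditional probability that none of them is aux-initially-occupied is at most $\exp(-c L p^{64})$ for some $c>0$. Union-bounding over the $O(L)$ relevant layers and using $Lp^{64}\geq p^{-64}\to\infty$ (from $L=\lceil p^{-128}\rceil$) yields (G5). The main obstacle is property (G1): since the curtain of \cref{curtain} does not force sites above it to be open, I must strengthen ``closed'' to ``bad'' in a finitely-dependent way and invoke LSS domination to recover the pointwise path-counting estimate of \cref{surface}. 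The choice $L=\lceil p^{-128}\rceil$ is then calibrated so that $Lp^{64}\to\infty$ handles (G5) while $\hqp\to 0$ comfortably beats the $O(L^3)$ union bound in (G3).
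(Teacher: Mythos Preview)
Your proof is correct and follows essentially the same approach as the paper: pass to the auxiliary configuration, strengthen ``closed'' to a $1$-dependent notion (your ``bad'' is the complement of the paper's ``swell''), invoke \cite{LSS} to recover a product-measure bound, apply the translated curtain construction of \cref{surface} to the shifted half-space $\{x_1+x_2+x_3\le 3L\}$, and then verify (G1)--(G5) via the exponential tail plus a conditional occupancy argument for (G5) using $L\hp\to\infty$. The only cosmetic differences are that the paper union-bounds over $O(L^2)$ diagonal rays rather than your $O(L^3)$ sites for the upper bound in (G3), and it phrases the monotonicity needed to transfer \cref{surface} through the stochastic domination slightly more explicitly.
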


We remark that $L$ does not need to be such a large power
of $p^{-1}$; with suitable modifications to the definitions and proofs
(perhaps at the expense of increased complexity), order
$p^{-1}\log p^{-1}$ would suffice.

\begin{proof}[Proof of \cref{good-likely}]
Fix $\epsilon>0$.  For most of the proof we will consider
the relevant event on the proto-brick $\hB$.  Therefore let
$\hp=p^{64}$ and $\hq=1-(1-q)^{64}$, which are the
probabilities that a site is, respectively, initially
occupied and open in the auxiliary configuration.  Let
$L=\lceil p^{-128}\rceil$.

Call a site $x$ \df{swell} if $x$ and $x+(0,0,1)$ and
$x+(-1,-1,1)$ are all open in the auxilliary configuration.
By the results of \cite{LSS}, the
configuration of swell sites dominates a product measure on
$\Z^3$ with parameter $1-\hqp$, where $\hqp=\hqp(\hq)\to 0$
as $\hq\to 0$.

\newcommand{\hD}{\widehat D}

Next, we apply \cref{surface} and translation invariance to
construct a swell curtain close to the half-space
$(L,L,L)+H$, rather than $H$. To be precise, translate the
configuration of swell sites by $-(L,L,L)$, construct the
set $D$ according to the last section, but using swell
sites in place of open sites, and translate it back by
$(L,L,L)$ to obtain a set $\hD$ of swell sites that
 lies in $((L,L,L)+H)^C=\{x:x_1+x_2+x_3>3L\}$.

Let $E_1$ be the event that $\hD$ is a curtain and contains
the site $\hx_0=(1,1,0)+(L,L,L)$.  By the construction of
$\hD$ in the previous section, $E_1$ is an increasing event
with respect to the configuration of swell sites. (This
follows because, in the notation of that section, the set
$A$ of sites reachable from $H$ via permissible paths is
decreasing). Therefore, by \cref{surface} and \cite{LSS},
there exists $q_1>0$ such that if $\hq<q_1$ then
$\P(E_1)>1-\epsilon$.

Moreover, by \cref{surface}, \cite{LSS}, and translation
invariance, for any deterministic $x=(x_1,x_2,x_3)$ with
$x_1+x_2+x_3=3L$ we have
\begin{equation}\label{good-likely-eq1}
\P\Bigl(\hD\cap\bigl\{x+(t,t,0):t\in[1,k]\bigr\}\ne \emptyset\Bigr)\ge 1-e^{-ck},
\qquad k>0,
\end{equation}
where $c>0$ is an absolute constant.  (This event is again
increasing in the configuration of swell sites, by the
construction of $\hD$.)
%
Now let $$\hS:=\hD\cap \hB.$$
Let $E_2$ be the event that every $x\in \hS$ satisfies
$x_1+x_2+x_3<4L$.  Then \cref{good-likely-eq1} and a union
bound imply that $\P(E_2)\geq 1- 16L^2\exp(-cL/2).$ Since
$L\to \infty$ as $p\to 0$ (i.e.\ as $\hp\to 0$), for $\hp$
is sufficiently small we have $\P(E_2)\geq 1-\epsilon$.

We have shown that $\hp$ and $\hq$ are both sufficiently
small then $\P(E_1 \cup E_2)\geq 1-2\epsilon$.  On $E_1\cup
E_2$, the set $\hS$ satisfies properties (G1)--(G4) in the
definition of a good proto-brick.  So far we have not
considered initially occupied sites (although the parameter
$p$ has appeared in the definition of the length scale
$L$).  One way to sample the auxiliary configuration is as
follows. First declare each site closed independently with
probability $\hq$.  Then, conditional on the resulting
configuration, declare each open site to be initially
occupied independently with probability $\hp/(1-\hq)$
($\geq \hp$). Let $E_3$ be the event that $\hS$ satisfies
property (G5). On $E_1\cap E_2$, each intersection with a
layer $\hS\cap\lev_k$ for $k\in[0,2L-1)$ contains at least
$L$ sites (by (G3) and (G4)).  Moreover, all sites in
$(\hS\cap \lev_k)+e_3$ are open (by (G1)). Hence,
$$\P\bigl(E_3\mid E_1\cap E_2\bigr) \geq
1-2L(1-\hp)^L\ge 1-2L\exp(-\hp L).
$$
Since $L\sim \hp\,^{-2}$, this is at least $1-\epsilon$ for
$\hp$ sufficiently small.

We have shown that for $\hp$ and $\hq$ sufficiently small,
with probability at least $1-3\epsilon$ the set $\hS$
satisfies (G1)--(G5) and contains $\hx_0$. Finally,
recalling the definition of the auxilliary configuration,
we deduce that for $p$ and $q$ sufficiently small, the
brick $B$ is likewise good and has a sail containing
$x_0\in\cell(\hx_0)$ with probability at least
$1-3\epsilon$.
\end{proof}

\section{Activation}\label{sec-activation}

Recall that a sail of a good brick is said to be activated
if its head is fully occupied (at some time).  To enable
our renormalization argument, we now show that for
appropriately placed good bricks, activation of one sail
leads to activation of another.

Let $B$ be the brick in standard position as before, and
let $B'$ be a brick with dimensions $(32L, 4L, 16L)$ such
that the centroid of its tail coincides with the centroid
of the tip of $B$.  (The idea is that the tip of $B$ cuts
the tail of $B'$ in two. There are eight possible choices of
$B'$: two possible boxes that share a tail, each with four possible
orientations.  See \cref{fig-bricksm} in the next section
for examples.) Then we write $B\boxright B'$. Similarly for
any isometry $\eta$ of $\Z^3$ we write $\eta(B)\boxright
\eta(B')$.

\begin{prop}\label{triggering}
Let $B$ and $B'$ be as described above. Suppose that they
are both good and let $S$ and $S'$ be any respective sails.
In the modified bootstrap percolation model, if $S$ is
activated by some time, then $S'$ is activated by some
later time.
\end{prop}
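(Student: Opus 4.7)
The plan is to transmit occupation from the activated head of $S$ into $S'$ via their overlap inside $B'$, and then to propagate it through $S'$ by a layer-by-layer growth argument in the spirit of \cref{curtain-spread}. Since $S$ is activated, its head (and in particular its tip) is fully occupied, and by $B \boxright B'$ the tip of $B$ sits at the center of the tail of $B'$. A direct calculation (using that the long axes of $B$ and $B'$ are perpendicular in standard placement) shows that for each integer $k_0$ in a window of length roughly $L/4$ situated strictly below $L$, the entire $k_0$-th layer $\lev_{k_0}$ of $B'$'s proto-frame is contained inside the tip of $B$ in absolute coordinates.

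Fix such a $k_0$ and consider the layer $\widehat{S}' \cap \lev_{k_0}$. Viewed in the proto-frame of $B'$, each cell of the tip of $S$ becomes a $4 \times 16 \times 1$ ``wall'' lying in a slice perpendicular to $B'$'s long direction, while each cell of $S'$ in $\lev_{k_0}$ is a ``column'' of shape $1 \times 4 \times 16$ parallel to $B'$'s long direction. Walls pierce columns transversally. By property (G4) applied to $\widehat{S}$, at most three walls pierce any given column, giving roughly $12$ of the $64$ sites of each $S'$-cell as initial occupied seeds. The plan is to argue, by combining the modified bootstrap rule inside each $S'$-cell (which from the pierce-seeds behaves like a finite two-dimensional problem on a $4 \times 16$ rectangle) and across adjacent columns along the path $\widehat{S}' \cap \lev_{k_0}$, that every site of every $S'$-cell in $\lev_{k_0}$ eventually becomes occupied. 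Conditions (G3) and (G4) for both $\widehat{S}$ and $\widehat{S}'$ control the pierce-point locations and the path geometry, so that a suitable choice of $k_0$ makes this achievable.

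Once the layer $S' \cap \lev_{k_0}$ is fully occupied, the argument underlying \cref{curtain-spread} applies (essentially via the forthcoming variant alluded to earlier as the sail growth lemma): properties (G1), (G2), and (G5) of $\widehat{S}'$ supply, respectively, the open upward sites, the between-layer diagonal links, and the initially occupied trigger site above each layer, so that occupation propagates from $\lev_{k_0}$ upward through every layer up to $\lev_{2L - 1}$. Since $k_0 < L$, every layer in the head of $S'$ is thereby fully occupied, and so $S'$ is activated. The main obstacle is the seeding step: the pierce-points occupy only a small fraction of each $S'$-cell, and one must coordinate occupation both within and across cells. This requires a careful geometric verification using (G3) and (G4) on both sails, and is essentially the combinatorial heart of the proposition.
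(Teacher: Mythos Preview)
Your proposal has a genuine gap precisely where you flag it: the seeding step. You reduce the problem to showing that the pierce pattern from $S$'s tip is enough to fully occupy $S'\cap\lev_{k_0}$, but you do not carry this out, and it is not clear that it can be done by the route you suggest. The intersection of a single $S$-cell (shape $1\times 4\times 16$ in absolute coordinates) with a single $S'$-cell (shape $16\times 1\times 4$) is at most a line of four sites, and nothing in (G3)--(G4) forces $S$ to meet every $S'$-cell in that layer at all, let alone in a pattern from which two-dimensional bootstrap on a $4\times 16$ rectangle can complete. The separation property of $S$ in the tip is a statement about paths crossing the full $e_1$-extent $4L$, whereas each $S'$-cell has $e_1$-extent only $16$; so separation alone does not give you per-cell seeding. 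Coordinating growth across cells faces the same obstacle: within a single proto-layer there is no analogue of the ``layer below'' that drives the mechanism of \cref{curtain-spread}.

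The paper avoids this intersection analysis entirely. Its key device is a monotone comparison lemma (\cref{sneaky}): if a set $F$ separates a region into components, then making $F$ fully occupied and the far component closed can only increase occupation in the near component. The paper shows (via \cref{separation}) that the head of $S$ separates the base of $S'$ from the head of $S'$ inside $\sigma(S')$. One then considers the artificial dynamics where everything outside $\sigma(S')$ is closed and the base of $S'$ is initially fully occupied; by \cref{growth-sail} and \cref{stretching} the whole of $S'$ becomes occupied. The comparison lemma then transfers this conclusion to the real dynamics, using only that the head of $S$ is occupied. No cell-by-cell intersection geometry is needed. If you want to repair your argument, the cleanest fix is to replace your seeding step with exactly this separation-plus-comparison manoeuvre.
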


We separate the proof into the following four lemmas,
starting with the underlying growth mechanism.

\begin{lemma}\label{growth-sail}
Suppose that the proto-brick $\hB$ is good, and let $\hS$
be any set satisfying the conditions in the definition of
good. Assume also that the intersection $\hS\cap \lev_0 $
with the bottom layer is entirely occupied initially, and
that $\Z^3\setminus \sigma(\hS)$ is entirely closed. Then
$\hS$ is entirely occupied at some time.
\end{lemma}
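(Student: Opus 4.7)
The plan is to mirror the proof of \cref{curtain-spread} in the finite setting of the proto-brick, inducting on the layer $k \in [0,2L)$ to show that $\hS \cap \lev_k$ becomes entirely occupied. The base case $k=0$ is the hypothesis. For the inductive step from $k-1$ to $k$ (with $k \ge 1$), set
$$Y := (\hS \cap \lev_{k-1}) + e_3 \subseteq \lev_k.$$
By (G1), $Y \subseteq \sigma(\hS)$, so every site of $Y$ is open; by (G4) applied to layer $k-1$, $Y$ is a $1$D path with steps $e_1$ and $-e_2$, no three consecutive the same, running from $\{x_1=0\}$ to $\{x_2=0\}$.

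The first step is to show $Y$ becomes entirely occupied. By the inductive hypothesis, each $y \in Y$ has its below-neighbor $y - e_3 \in \hS \cap \lev_{k-1}$ occupied, supplying direction $3$ in the modified $r=2$ rule; by (G5) applied with index $k-1 \in [0, 2L-1)$, $Y$ contains an initially occupied site. Occupation then propagates along the connected path $Y$ (direction $3$ from below, direction $1$ or $2$ from an adjacent path-neighbor), filling $Y$. The second step handles $x \in (\hS \cap \lev_k) \setminus Y$. By (G2) (available because $k \ge 1$), we have $x + (1,1,-1) \in \hS$, so $y_0 := x + (1,1,0) \in Y$. Following the path $Y$ forward from $y_0$, using the no-three-consecutive rule together with the fact that $Y$ extends to $\{x_2=0\}$, one locates a site $x + a e_1 \in Y$ with $a \in \{1,2,3\}$; symmetrically, going backward, one obtains $x + b e_2 \in Y$ with $b \in \{1,2,3\}$. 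Each intervening site $x + i e_1$ (for $i \in (0,a)$) or $x + j e_2$ (for $j \in (0,b)$) lies in $\sigma(\hS)$ --- either directly, as a member of $Y$ on the relevant path segment, or as the $(-1,-1,1)$-shift of a site $x + (i+1) e_1 + e_2 - e_3 \in \hS$ visited by the path in layer $k-1$ --- and is therefore open by (G1). A short case analysis on $a,b \in \{1,2,3\}$ shows that propagation occupies these intervening sites (each eventually acquires two occupied neighbors, one further along the path on the row above and one further along the intervening row), so in particular $x + e_1$ and $x + e_2$ become occupied; then $x$ has occupied neighbors in directions $1$ and $2$ and becomes occupied by the modified rule.

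The main technical point is the boundary of $\hB$: one must verify that $Y$ extends the required up to three steps forward and backward from $y_0$, even when $x$ lies against the $\{x_1=0\}$ or $\{x_2=0\}$ face of $\hB$. This is settled by (G4), which forces $Y$ to run between these two faces: going forward from $y_0$ (whose $x_2$-coordinate is $x_2 + 1 \ge 1$), the path must continue until it reaches $x_2$-coordinate $0$, and by the no-three-consecutive rule this happens within three steps; the backward direction is symmetric. The hypothesis that $\Z^3 \setminus \sigma(\hS)$ is entirely closed plays only a passive role --- it rules out extraneous growth outside $\sigma(\hS)$, and the argument never requires occupied neighbors outside $\sigma(\hS)$.
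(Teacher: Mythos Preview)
Your proof is correct and follows essentially the same approach as the paper: induction on the layer, first filling $Y=(\hS\cap\lev_{k-1})+e_3$, then reaching the remaining sites of $\hS\cap\lev_k$ via the at-most-three-step path argument from \cref{curtain-spread}. You supply more detail than the paper on the boundary verification (that $Y$ genuinely extends far enough because (G4) forces it to run between the faces $\{x_1=0\}$ and $\{x_2=0\}$) and on why the intervening sites lie in $\sigma(\hS)$, both of which the paper leaves implicit.
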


\begin{proof} The argument is essentially the same as for \cref{curtain-spread},
except that one must verify that the relevant sites lie in
the proto-brick. We prove by induction on $k=0,\ldots,
2L-1$ that the layer $\hS\cap\lev_k$ is eventually
occupied. For $k=0$ this holds by assumption.

Fix $k\geq 1$ and let $Y=(\hS\cap\lev_{k-1})+e_3$.  Then
$Y$ becomes occupied, since it is connected and open, it
contains an occupied site, and it is adjacent to
$\hS\cap\lev_{k-1}$ which becomes occupied by the inductive
hypothesis.  If $z\in \hS\cap\lev_k$ then either $z\in Y$
or $z=y-(1,1,0)$ where $y\in Y$.  In the latter case there
exist $z+a e_1,z+ b e_2\in\pi+e_3\in Y$ with $a,b\in[0,3]$.
The bootstrap rule then guarantees that $z+ie_1$ and
$z+je_2$ become occupied for $i\in(0,a)$ and $j\in(0,b)$,
and then $z$ becomes occupied.
\end{proof}

The following comparison lemma states that cutting off part
of a configuration only increases the eventually occupied
set, provided we make the cut surface occupied. This will
enable us to make use of sails that intersect each other.

\begin{lemma}\label{sneaky}
Consider a set of sites $A$, and a subset $F\subseteq A$.
Let $B$ be a connected component of $A\setminus F$. Suppose
that every site in $A^C$ is closed but that the initial
configuration is otherwise arbitrary.  Now alter the
initial configuration by making $F$ initially occupied but
$A\setminus (F\cup B)$ closed.  The alteration (weakly)
increases the set of eventually occupied sites in $B$.
\end{lemma}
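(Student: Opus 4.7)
The plan is a direct monotonicity coupling. I would prove by induction on time $t$ that, writing $O_t$ and $O_t'$ for the occupied sets in the original and altered configurations, one has $O_t\cap B\subseteq O_t'$; this immediately yields the lemma. The crucial geometric observation enabling the induction is that every neighbor (in $\Z^3$) of a site $b\in B$ lies in $B\cup F\cup A^C$: a neighbor in $A\setminus F$ outside $B$ would contradict $B$ being a \emph{connected component} of $A\setminus F$. Consequently no site of $A\setminus(F\cup B)$---precisely the set we additionally close---is adjacent to $B$, and those closures have no direct bearing on the dynamics within $B$.

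The base case $t=0$ is immediate since the alteration preserves initial states on $B$ (and on $A^C$), so $O_0\cap B=O_0'\cap B$. For the inductive step, take any $b\in(O_{t+1}\setminus O_t)\cap B$. Since $b\in B$, its initial state---and hence its openness---is the same in both configurations, so $b$ is open in the altered configuration too. In the original, $b$ satisfies the modified bootstrap growth criterion at time $t$; by the observation above, each of $b$'s occupied neighbors lies in $B$ or $F$, since sites in $A^C$ are permanently closed in both configurations. Neighbors in $B$ occupied at time $t$ in the original are occupied at time $t$ in the altered by the inductive hypothesis, and neighbors in $F$ are occupied throughout the altered process since $F$ is made initially occupied. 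Hence the same growth criterion for $b$ is met at time $t$ in the altered, giving $b\in O_{t+1}'$. The argument works verbatim for the standard bootstrap rule as well, since both rules are monotone in the set of occupied neighbors.

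The only nontrivial ingredient is the component-structure observation, which relies crucially on $B$ being a connected component (not merely a subset) of $A\setminus F$; if $B$ were just a subset, neighbors in $A\setminus(F\cup B)$ could in principle be occupied in the original and contribute to growth in $B$, and the coupling would fail. Once that point is recognized, the proof is a routine monotone-coupling induction with no further obstacles.
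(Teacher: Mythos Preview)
Your proof is correct and follows essentially the same approach as the paper: both argue by induction on the time step, using the observation that any occupied neighbor of a site in $B$ must lie in $B$ (handled by the inductive hypothesis) or in $F$ (initially occupied in the altered configuration). You make the connected-component observation more explicit than the paper does, but the argument is the same.
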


\begin{proof}
We proceed by induction on the time step. Suppose that at
all times prior to $t$, the set of occupied sites of $B$ in
the altered dynamics dominates the set in the original
dynamics.  Assume that a site $x\in B$ becomes occupied in
the original dynamics at time $t$.  Any neighbor of $x$
that was occupied in the original dynamics at time $t-1$
either lies in $B$, in which case it is also occupied in
the altered dynamics by the induction hypothesis, or it
lies in $F$, in which case it was \emph{initially} occupied
in the altered dynamics. Thus $x$ also becomes occupied in
the altered dynamics.
\end{proof}

\begin{lemma} \label{stretching}  From any configuration on $B$,
form the auxiliary configuration on $\hB$, and perform the modified bootstrap percolation
dynamics from the auxiliary configuration with all sites
outside $\hB$ closed. If $x$ becomes occupied in the
auxiliary dynamics, then $\cell(x)$ becomes fully occupied
in the original dynamics.
\end{lemma}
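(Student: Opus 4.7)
I proceed by induction on the first time step $t$ at which $x\in\hB$ becomes occupied in the auxiliary dynamics. The base case $t=0$ is immediate: if $x$ is initially occupied in the auxiliary configuration then, by definition of that configuration, every site of $\cell(x)$ is initially occupied in the original configuration, so $\cell(x)$ is fully occupied at time $0$.

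For the inductive step, assume $x$ first becomes occupied in the auxiliary dynamics at some time $t\ge 1$. Since sites outside $\hB$ are closed in the auxiliary dynamics, the modified bootstrap rule applied at $x$ furnishes a set $I\subseteq\{1,2,3\}$ with $|I|\ge 2$ and, for each $i\in I$, a neighbor $y_i\in\{x-e_i,\,x+e_i\}\cap\hB$ that is occupied in the auxiliary dynamics by time $t-1$. By the inductive hypothesis, for each $i\in I$ there is a time $T_i$ by which $\cell(y_i)$ is fully occupied in the original dynamics; set $T:=\max_{i\in I}T_i$. It remains only to show that once two cells $\cell(y_i)$ in distinct axis directions are full, the cell $\cell(x)$ also fills under the modified bootstrap rule.

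For this geometric step, the key point is that $\cell(x)$ is a $(1,4,16)$-box and each $\cell(y_i)$ abuts $\cell(x)$ across the entire face of $\cell(x)$ perpendicular to $e_i$ (on the side determined by the sign of $y_i$). I would then perform a lexicographic sweep of $\cell(x)$, ordering its sites by joint distance to the (at least two) occupied adjacent faces, and verify inductively that at each site $z$ at least two distinct axis directions $i\in I$ already have an occupied neighbor of $z$ -- either in $\cell(y_i)$ if $z$ sits on the corresponding face of $\cell(x)$, or in a previously processed site of $\cell(x)$ otherwise. The modified bootstrap rule then occupies $z$. The case $1\in I$ is especially clean, since $\cell(x)$ has $e_1$-extent one and so every site of $\cell(x)$ already has direction $1$ satisfied via $\cell(y_1)$; the remaining cases (including mixed $\pm$ signs on the $y_i$) are handled symmetrically by sweeping from the appropriate corner of $\cell(x)$. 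The statement is essentially routine once the face-to-face tiling property of the map $x\mapsto\cell(x)$ is used, and the only real obstacle is the bookkeeping over the finite list of $(I,\pm)$ combinations, with no conceptual difficulty.
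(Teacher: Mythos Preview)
Your proof is correct and follows exactly the approach the paper intends: the paper's own proof consists of the single sentence ``This follows by straightforward induction on time step,'' and your argument simply fills in the details of that induction. One small point worth making explicit is that $x$ becoming occupied in the auxiliary dynamics forces $x$ to be open there, hence every site of $\cell(x)$ is open in the original configuration, which is what licenses the sweep argument inside $\cell(x)$.
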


\begin{proof}
This follows by straightforward induction on time step.
\end{proof}

Next we state a geometric fact about sails. Let
$A\subseteq\Z^3$ and let $F,B_1,B_2$ be disjoint subsets of
$A$. We say that $F$ separates $B_1$ and $B_2$ in $A$ if
$A\setminus F$ contains no nearest-neighbor path from $B_1$
to $B_2$.

\begin{lemma}\label{separation} Suppose that the brick $B$ is good. Then any
sail $S$ for $B$ separates, in the tip of $B$, the two
faces of the tip $\{0\}\times [0, 4L) \times [16L,32L)$ and
$\{4L-1\}\times [0, 4L) \times [16L,32L)$.
\end{lemma}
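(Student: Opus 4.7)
My plan is to pass to the proto scale and exhibit a two-valued ``side function'' on the proto-tip that is locally constant away from $\hS$, takes opposite values on the proto-images of the two faces, and leverages the layer structure (G2) of the sail. Because the sail is a union of full cells, each cell of $B$ lies entirely in $S$ or entirely outside, so any brick-level nearest-neighbor path in the tip of $B$ disjoint from $S$ projects to a nearest-neighbor path of cell indices in the proto-tip $\widehat T := [0,4L)\times[0,L)\times[L,2L)$ disjoint from $\hS$, connecting the proto-faces $\widehat F_1 := \{0\}\times[0,L)\times[L,2L)$ and $\widehat F_2 := \{4L-1\}\times[0,L)\times[L,2L)$. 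It therefore suffices to separate $\widehat F_1$ from $\widehat F_2$ in $\widehat T$ using $\hS\cap\widehat T$.

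By (G4), each layer $\pi_k := \hS\cap\lev_k$ with $k\in[L,2L)$ is a monotone 2D path with steps $e_1$ and $-e_2$ from a start $(0,y_k)$ to an end $(x_k,0)$; by (G3), $y_k,x_k\in(3L-k,4L-k)$, and since $k<2L$ this gives $y_k\ge L+1$, so $\pi_k$ intersects every tip row $x_2=h\in[0,L-1]$ in a contiguous horizontal interval $[l_h^{(k)},r_h^{(k)}]$ with $1\le l_h^{(k)}\le r_h^{(k)}\le 4L-2$. Define a side function $\phi$ on $\widehat T\setminus\hS$ by setting $\phi(x_1,x_2,k)=-1$ if $x_1<l_{x_2}^{(k)}$ and $\phi(x_1,x_2,k)=+1$ if $x_1>r_{x_2}^{(k)}$. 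The above bounds place $\widehat F_1\subseteq\phi^{-1}(-1)$ and $\widehat F_2\subseteq\phi^{-1}(+1)$.

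The remaining task is to verify that $\phi$ is constant along every nearest-neighbor edge of $\widehat T\setminus\hS$. For intra-layer edges ($\pm e_1$ and $\pm e_2$) this is immediate, since no integer step can jump over the contiguous horizontal segment $[l_h^{(k)},r_h^{(k)}]$ without landing on it. The core case is the interlayer edge $\pm e_3$, and here property (G2) is essential: every $(a,b,k+1)\in\pi_{k+1}$ has a parent in $\pi_k$ at $(a,b)$ or at the $(+1,+1)$-shifted position. Applying (G2) to the rightmost site of $\pi_{k+1}$ in each tip row $x_2$, then descending one row via $-e_2$ and reapplying (G2), together with the no-three-consecutive-steps condition (C1) inherited from (G4), I expect to derive the integer inequality $l_{x_2}^{(k)}\le r_{x_2}^{(k+1)}+1$, so that no integer $x_1$ lies strictly between these values and hence no $e_3$ step with both endpoints off $\hS$ can switch the sign of $\phi$. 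This interlayer bookkeeping is the main technical obstacle, and particular care will be needed at the boundary row $x_2=0$ where the descent argument terminates. Once $\phi$ is established as locally constant, $\widehat F_1$ and $\widehat F_2$ lie in distinct connected components of $\widehat T\setminus\hS$, which is exactly the asserted separation.
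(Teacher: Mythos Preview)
Your plan mirrors the paper's proof: split each proto-tip layer into a left and a right piece (your $\phi^{-1}(\pm 1)$; the paper's $U_k,V_k$) using (G3)--(G4), then check via (G2) that no $e_3$-edge joins the left side of one layer to the right side of the next.  The paper is much terser, asserting only that this interlayer check ``is easily verified from property (G2)''; you have made the mechanism explicit and correctly isolated the key inequality $l_h^{(k)}\le r_h^{(k+1)}+1$.  (You also need the companion inequality $l_h^{(k+1)}\le r_h^{(k)}+1$, but that one follows at once from $\pi_{k+1}\subseteq\pi_k\cup(\pi_k-(1,1))$, which (G2) gives directly.)

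The boundary row $x_2=0$ that you flag is, however, a genuine obstruction rather than a detail to be tidied up.  Your descent trick---applying (G2) to $(r_h^{(k+1)},h-1,k+1)=(l_{h-1}^{(k+1)},h-1,k+1)\in\pi_{k+1}$---does yield $l_h^{(k)}\le r_h^{(k+1)}+1$ for every $h\ge 1$, but at $h=0$ no such site exists.  Applying (G2) only to the endpoint $(x_{k+1},0,k+1)$ gives, in the bad case, $(x_{k+1}{+}1,1)\in\pi_k$; combined with the no-three-consecutive rule this bounds $l_0^{(k)}=r_1^{(k)}\le l_1^{(k)}+2\le x_{k+1}+3$, which permits $l_0^{(k)}=x_{k+1}+2$.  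Concretely, take $\pi_k$ ending $\dots,(x_{k+1},2),(x_{k+1},1),(x_{k+1}{+}1,1),(x_{k+1}{+}2,1),(x_{k+1}{+}2,0),\dots$ and $\pi_{k+1}$ ending $\dots,(x_{k+1},1),(x_{k+1},0)$; every site of $\pi_{k+1}$ shown has a (G2)-parent in $\pi_k$, and the fragments extend to full paths satisfying (G2)--(G4).  Now the site $(x_{k+1}{+}1,0)$ has $\phi=-1$ at layer $k$ (it lies left of $l_0^{(k)}=x_{k+1}{+}2$) but $\phi=+1$ at layer $k+1$ (it lies right of $r_0^{(k+1)}=x_{k+1}$), and walking along the row $x_2=0$ in each layer produces a path in $\widehat T\setminus\hS$ from $\widehat F_1$ to $\widehat F_2$.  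So the inequality you expect at $h=0$ can fail, and with it the separation you are trying to prove; the paper's one-line appeal to (G2) does not address this corner either.
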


\begin{proof}
By property (G4) of a good proto-brick, the intersection of
$S$ with a layer $\lev_k$ is an oriented path, thickened by
conversion of sites to cells. Therefore its complement
$\lev_k\setminus S$ clearly has two components, $U_k$ and
$V_k$ say, which contain the intersections of the first and
second faces respectively with $\lev_k$, by (G3).

It remains to check that no site of $U_{k-1}$ is adjacent
to a site of $V_k$, and likewise for $V_{k-1}$ and $U_k$,
for $k=1,\ldots, 4L-1$.  Since such adjacent sites would
differ by $e_3$, this is easily verified from property
(G2).  (Also see \cref{paths}).
\end{proof}

Now we prove the main result of this section.

\begin{proof}[Proof of \cref{triggering}]
By definition of $S$ from $\hS$ and \cref{separation},
the head of $S$ separates the base
of $S'$ from the head of $S'$ in
$\sigma(S')$.

Consider the dynamics from the following new configuration.
Make every site outside $\sigma(S')$ closed. Make the
base of $S'$ occupied. Otherwise, retain the initial
configuration in $\sigma(S')$.
By \cref{stretching,growth-sail},
the entire sail $S'$ becomes occupied.
The proof is concluded by applying \cref{sneaky} to $\sigma(S')$.
\end{proof}

\section{Renormalization}\label{sec-renormalization}

In this section we prove the main result, \cref{main}, as well as \cref{main-follow}.  We
start with a simple geometric ingredient. Recall that $B$
is the brick in standard position.

\begin{samepage}
\begin{lemma} \label{renormalization}
There exist bricks $B_i$, $B_i'$, for $i=1,2,3$, with
\begin{align*}
&B\boxright B_1\boxright B_2\boxright B_3,\\
&B\boxright B_1'\boxright B_2'\boxright B_3',
\end{align*}
such that $B$, $B_3$, and $B_3'$ are distinct and have the
same orientation. Furthermore, there exist vectors
$u,u'\in\Z^3$ and a constant $C$, none of them depending on
$L$, such that $B_3=B+Lu$ and $B_3'=B+Lu'$  (so in particular $Lu$ and $Lu'$
are the distinguished corners of $B_3$ and $B_3'$ respectively),
and all seven bricks lie within distance $CL$ of the origin.
\end{lemma}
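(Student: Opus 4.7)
The plan is a direct construction. The key structural observation is that $\boxright$ cyclically permutes the axis-length assignment of a brick: starting from the standard dimensions $(4L,16L,32L)$, a $\boxright$-successor has global dimensions $(32L,4L,16L)$, then $(16L,32L,4L)$, and after three steps returns to $(4L,16L,32L)$. In particular, any brick obtained from $B$ by three $\boxright$ steps automatically has the same orientation as $B$ and therefore differs from $B$ only by a translation, so ``same orientation'' in the statement comes for free.

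For the first chain I would choose the ``canonical'' brick at every step, namely the one whose head extends in the positive coordinate direction. Starting with $B=[0,4L)\times[0,16L)\times[0,32L)$, whose tip centroid is $(2L,2L,24L)$, this gives $B_1=[-6L,26L)\times[0,4L)\times[16L,32L)$ via the isometry $\eta_1(x_1,x_2,x_3)=(x_3,x_1,x_2)+(-6L,0,16L)$ from standard $B$ to $B_1$. Iterating this construction twice more and composing the three local isometries yields
\begin{align*}
  B_2 &= [10L,26L)\times[-6L,26L)\times[16L,20L),\\
  B_3 &= [10L,14L)\times[10L,26L)\times[10L,42L) = B + L(10,10,10),
\end{align*}
so $u=(10,10,10)\in\Z^3$.

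For the second chain I would exploit the eight choices available at each $\boxright$ step (two box positions sharing a tail, four rotational orientations each). Swapping a single choice---for concreteness, replacing the last brick by one whose head extends in the opposite direction from the tip of $B_2$---yields a different $B_3'=B+Lu'$. A short computation confirms $u'\neq u$ and $u'\neq 0$, so $B$, $B_3$, and $B_3'$ are pairwise distinct.

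Since every tip, tail, and distinguished corner produced by this construction sits at a point of $L\cdot\Z^3$, the vectors $u,u'$ lie in $\Z^3$ and are independent of $L$. Each $\boxright$ step displaces the brick by $O(L)$, so the seven bricks fit inside a box of side at most $CL$ for some absolute constant $C$ (a very generous $C=100$ suffices). The main expected obstacle is just bookkeeping: composing three local isometries of $\Z^3$ and tracking the global coordinates of tips, tails, heads, and distinguished corners through the chain. The cyclic-orientation fact in the first paragraph is what makes the statement clean---without it one would also need to argue separately that $B_3$ has the right dimensions.
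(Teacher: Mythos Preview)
Your first chain is fine, but the sentence ``any brick obtained from $B$ by three $\boxright$ steps automatically has the same orientation as $B$'' is false, and this error propagates to wreck your second chain.  What three $\boxright$ steps guarantee is only that the \emph{dimension assignment} returns to $(4L,16L,32L)$.  The linear part of each $\boxright$ isometry is the cyclic permutation $\sigma$ composed with a diagonal sign matrix $D_i\in\{\pm1\}^3$ recording the three binary choices at that step; after three steps the linear part is $\sigma^3$ conjugated into a product of three diagonal sign matrices, which is a diagonal sign matrix but not in general the identity.  Concretely, given $B_1,B_2$ with their orientations, exactly \emph{one} of the eight choices for $B_3$ has linear part equal to the identity, i.e.\ is a translate of $B$.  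Your first chain happens to hit it because you take $D_1=D_2=D_3=I$.

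For the second chain you propose $B_1'=B_1$, $B_2'=B_2$, and then alter only the last step.  But by the uniqueness just noted, the only $B_3'$ with $B_2\boxright B_3'$ that is a translate of $B$ is $B_3$ itself, so you cannot produce a distinct $B_3'$ this way.  (Your specific suggestion, flipping the head to the $-e_3$ side, puts the tail of $B_3'$ above its head, so $B_3'$ is visibly not a translate of $B$.)  The fix is to vary an earlier step: for instance keep $B_1'=B_1$, take $B_2'$ to be the \emph{same box} as $B_2$ but with a different distinguished corner (hence a different tip), and then let $B_3'$ be the unique translate of $B$ with $B_2'\boxright B_3'$.  This is exactly what the paper does; the two resulting translates land at $B+L(10,22,22)$ and $B+L(22,22,22)$ in the paper's coordinates.
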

\end{samepage}
\begin{figure}
\centering
\includegraphics[width=.36\textwidth]{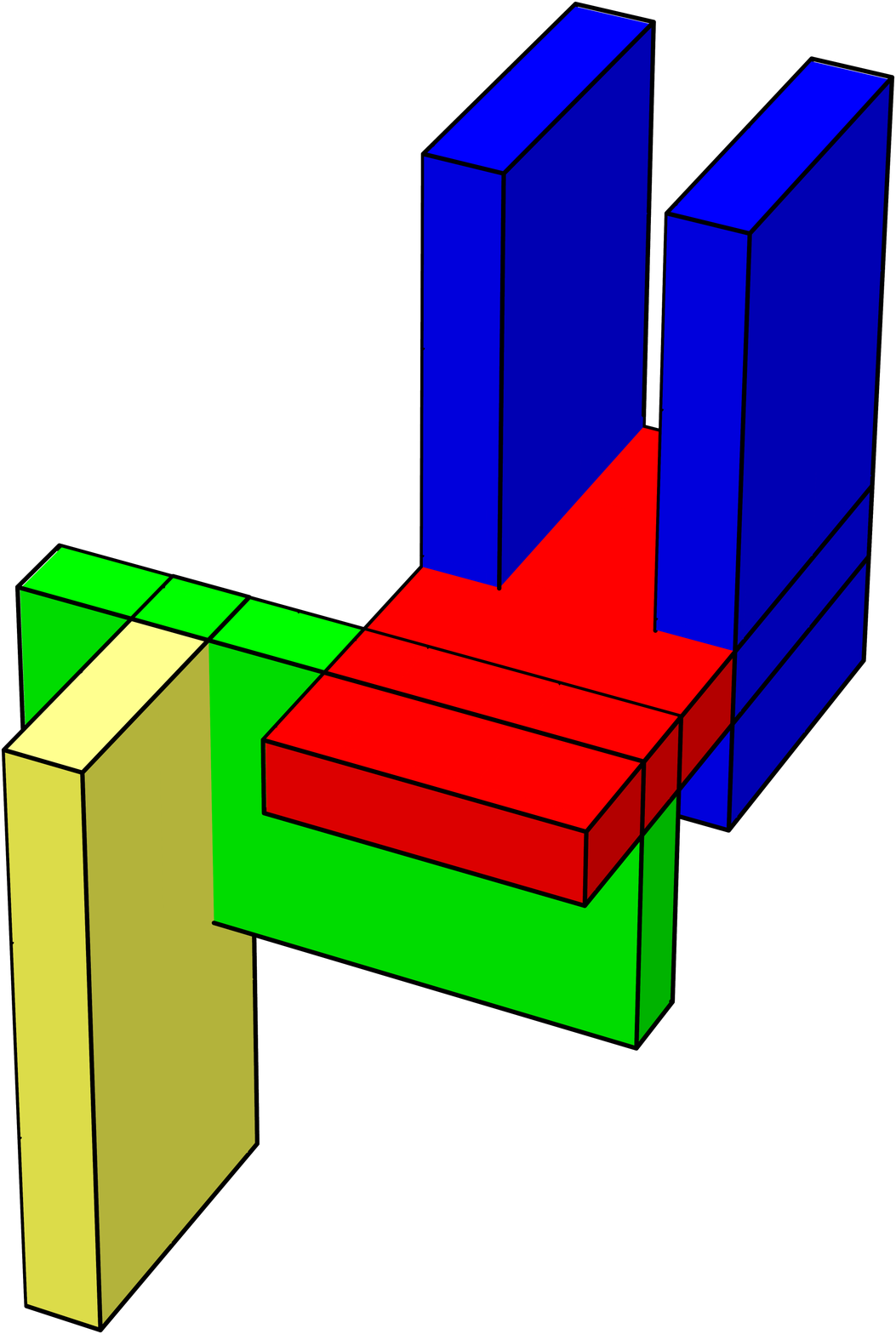}
\caption{\label{fig-bricksm} {Illustration
of the proof of \cref{renormalization}.}
The initial brick $B$ (at lower left) is yellow, and $B_1=B_1'$
is green. The red box depicts $B_2$ and $B_2'$, which
are the same box but with different orientations. Finally, the two blue bricks are
$B_3$ and $B_3'$. These are translations
of $B$ by $(10,22,22)L$ and $(22,22,22)L$.}
\end{figure}

\begin{proof}  See \cref{fig-bricksm}.
Recall that $B$ has dimensions $(4L,16L,32L)$.  We choose
$B_1$ and $B_1'$ equal to each other, with dimensions
$(32L,4L,16L)$, and satisfying $B\rhd B_1$. Then take $B_2$
and $B_2'$ to be the same box as each other, with
dimensions $(16L,32L,4L)$, but with different orientations
and in particular different tips.  Finally take $B_3$ and $B_3'$ to be
suitable translations of $B$, as determined by these tips.
\end{proof}

\newcommand{\Pper}{\P_{\text{\tt per}}}
\newcommand{\Pnuc}{\P_{\text{\tt nuc}}}

\begin{proof}[Proof of \cref{main}]
As discussed in the introduction, it suffices to prove the case of the
modified bootstrap model on $\Z^3$.

We will compare with oriented
percolation in $\Z^2$. Let $L=\lceil p^{-128}\rceil$ and
let $u,u'\in\Z^3$ be as in \cref{renormalization}. Also fix
$\epsilon>0$. For $a=(a_1,a_2)\in\Z^2$, define the
associated brick
$$B(a):=B+L a_1 u+ La_2 u'.$$
Call the site $a$ \df{excellent} if the translations by
$La_1 u+ La_2 u'$ of the seven bricks $B,B_i,B_i'$ of
\cref{renormalization} are all good.

Suppose that $a$ and $b$ are excellent, so that in
particular the bricks $B(a)$ and $B(b)$ are good.  Suppose
also that there is a path of excellent sites in $\Z^2$ from
$a$ to $b$ consisting of steps $e_1$ and $e_2$. (We call a
path with these steps \df{oriented}). Then by
\cref{renormalization,triggering}, if some sail of $B(a)$
is activated then any sail of $B(b)$ is activated at some
later time.

Let $E$ be the event that there exists an
 excellent bi-infinite oriented path $\pi$ in $\Z^2$ containing $0=(0,0)$, and
that moreover $B=B(0)$ has a good sail containing
$x_0:=(L+1,4L+4,16L)$ in its head. By
\cref{renormalization}, the random configuration of
excellent sites is $m$-dependent for some fixed $m$ not
depending on $L$.  Therefore, by \cite{LSS},
\cref{good-likely}, and the fact that oriented percolation
on $\Z^2$ has a nontrivial phase transition
 (see e.g.~\cite{g}), if $p$ and $q$ are
sufficiently small then $\P(E)\geq 1-\epsilon$.

It remains to show that \emph{some} sail on the path is
activated, for which a rather crude sprinkling argument
will suffice. Assuming $2p+q<1$, we consider two coupled
initial configurations.  The \df{level-$1$} configuration
has parameters $p$ and $q$ as before.  Conditional on the
level-$1$ configuration, the \df{level-$2$} configuration
is obtained by adding some further occupied sites;
specifically, we declare each open site that was not
initially occupied at level $1$ to be initially occupied at
level $2$ independently with probability $p/(1-p-q)$, and
leave the configuration otherwise unchanged. The law of the
level-$2$ configuration is simply a product measure with
parameters $2p$ and $q$.  Now condition on the level-$1$
configuration, and suppose that it is such that $E$ occurs
at level $1$.  Fix an excellent oriented path $\pi$ as in
the definition of $E$, and let $\pi^+$ and $\pi^-$ be the
forward and backward halves of $\pi$ that start at $0$ and
end at $0$ respectively. Then for each site $a$ of $\pi^-$,
\emph{all} open sites in the brick $B(a)$ are initially
occupied at level $2$ with probability at least $p^{|B|}$,
independently for each such $a$. Therefore, conditionally
almost surely, some site $a$ on $\pi^-$ has this property,
which implies in particular that any sail of the associated
brick $B(a)$ is activated at level $2$.

We conclude that if $2p$ and $q$ are sufficiently small
then with probability at least $1-\epsilon$ there exists an
infinite sequence of distinct activated sails, each
intersecting the next, one of which contains $x_0$ in its
head. By translation invariance we conclude that with
probability at least $1-\epsilon$, the origin lies in an
infinite connected eventually occupied set, as required.
\end{proof}

\begin{proof}[Proof of \cref{main-follow}]
It follows
from \cref{main} that $\lim_{q\to 0^+} \phi^+(q)=1$,
which implies that $q_c>0$.
As $\phi$ is a decreasing function, it is
positive on $[0,q_c)$. Our remaining task is to prove the claimed
upper bound on $q_c$, for which it suffices to consider the \emph{standard} (as opposed to modified) bootstrap model with $r=2$ on $\Z^d$ for $d\geq 3$.

\newcommand{\cC}{\mathcal Z}
Call a site \df{$3$-open} if it is open and has at least
$3$ open sites among its $2d$ neighbors. Let $p_c'$ be the
critical probability for existence of an infinite connected
set of $3$-open sites in $\Z^d$. Then clearly $p_c'\geq
p_c^{\text{\rm site}}$. For $d=3$, the method of essential
enhancements \cite{AG,BBR} shows that $p_c'> p_c^{\text{\rm
site}}$. (The strict inequality is expected to hold for
$d\geq 4$ also, but no complete proof is available -- see
\cite{BBR}).

For any set $Z\subseteq\Z^d$, let the external boundary $\partial Z$ be the set of sites
in $\Z^d\setminus Z$ that have a neighbor in $Z$.  Note that $|\partial Z|\le 2d|Z|$.
Assume that no
site in $\partial Z$ is $3$-open, and that no site in $Z\cup\partial Z$ is initially occupied. Then we claim that no site
in $Z\cup\partial Z$ is ever occupied. Indeed, suppose on the contrary that
$x\in Z\cup \partial Z$ is a first site in the
set to become occupied, say at time $t\ge 1$. Then $x\in \partial Z$,
and so $x$ has
at most $2$ open neighbors, of which at least one is in $Z$,
which by assumption
is unoccupied at time $t-1$.  So $x$ has at most one occupied neighbor at time $t-1$. Since $r=2$, this contradicts the assumption that $x$ becomes occupied at time
$t$.

Now let $q>1-p_c'$.  Given the random configuration on $\Z^d$,
create an \df{adjusted} configuration by converting all closed sites among the origin and its $2d$ neighbors to open (but not initially occupied) sites.
Let $\cC$ be the maximal connected set of $3$-open sites
containing the origin in the adjusted configuration.
Clearly, $0<|\cC|<\infty$ almost surely. Then we have
\begin{equation*}
\begin{aligned}
&\P(0\text{ is eventually occupied})\\
&\le \P(0\text{ is eventually occupied starting from the adjusted configuration})\\
&\le \P(\text{$\cC\cup\partial\cC$ contains an initially occupied site})\\
&\le\sum_{k=1}^\infty \P\bigl(|\cC|=k\bigr) \bigl(1-(1-p)^{k+2dk}\bigr).
\end{aligned}
\end{equation*}
This tends to $0$ as $p\to 0^+$, by dominated convergence.
\end{proof}

\section{Open Problems} \label{sec-open}

Recall that $\phi(p,q)$ is the probability that the origin is eventually occupied
with densities $p$ and $q$ of initially occupied and closed sites respectively, and that we define $\phi^+(q)=\phi(0^+,q)$ and $q_c=\inf\{q: \phi^+(q)=0\}$.

\begin{enumerate}[(i)]
\item  For which dimensions and thresholds $d> r\geq 3$ is it the case that $\phi(p,q)\to 1$ as $(p,q)\to (0,0)$?
    As conjectured in \cite{Mor}, the answer ``all'' seems plausible.
    (The current paper proves the cases $d>r=2$, while the conclusion fails for $d=r$.)
\item Where the convergence in (i) above does not hold
    (presumably, only for $d=r$), suppose that $p,q\to 0$
    in such a way that $ \log q/\log p\to\alpha. $ For
    which $\alpha$ does $\phi$ converge to $0$, or to
    $1$?  The articles \cite{GM} and \cite{GHS} address
    $d=r=2$ and $d=r=3$ respectively.

\item Is $\phi^+$ continuous at $q_c$?

\item Consider the critical value $q_c=q_c(d)$ as a function of dimension (with $r=2$, say).
Does $q_c$ approach $1$ as $d\to\infty$ and, if so, at what rate?


\item Let $T$ be the first time the origin is occupied.
What is the asymptotic behavior of $T$ as $p,q\to 0$?  (For example, find ``close'' functions $f$ and $g$ of $p$ and $q$ for which $f\leq T\leq g$ with high probability).

\item For $r=2$ and $d=3$, consider
$$
\gamma(q):=\limsup_{p\to 0+}p^{-1}\phi(p,q).
$$
Is there a $q>q_c$ for which this is infinite?  If so, this would
distinguish the phase transition in the case $r=2$ from that of the case $r=1$ (where $q_c=1-p_c^{\text{\rm site}}(\Z^d)$, and $\gamma(q)$ is finite for all $q>q_c$.)
\end{enumerate}

\section*{Acknowledgements}

We thank David Sivakoff for many valuable discussions.  
Janko Gravner was partially supported by the NSF grant DMS--1513340, Simons Foundation Award \#281309, and the Republic of Slovenia’s Ministry of Science program P1--285.  He also gratefully acknowledges the hospitality of the Theory Group at Microsoft Research, where most of this work was completed.

\bibliographystyle{alphanum}
\bibliography{pbp}

\end{document}